\newcommand{\Z}{\ensuremath{\mathbb{Z}}}
\newcommand{\R}{\ensuremath{\mathbb{R}}}
\newcommand{\C}{\ensuremath{\mathbb{C}}}
\newcommand{\Ri}{\ensuremath{\mathbb{R}_{\infty}}}
\newcommand{\Zi}{\ensuremath{\mathbb{Z}_{\infty}}}
\newcommand{\fr}[3]{\ensuremath{{#1}_{#2}\langle{#3}\rangle}}
\newcommand{\ff}[3]{\ensuremath{{#1}_{#2}(\langle{#3}\rangle)}}
\DeclareMathOperator{\Aut}{Aut}
\DeclareMathOperator{\op}{op}
\DeclareMathOperator{\supp}{supp}
\DeclareMathOperator{\charac}{char}
\theoremstyle{plain}
\newtheorem{theorem}{Theorem}[section]
\newtheorem{proposition}[theorem]{Proposition}
\newtheorem{lemma}[theorem]{Lemma}
\newtheorem{corollary}[theorem]{Corollary}
\theoremstyle{definition}
\newtheorem{example}[theorem]{Example}
\begin{document}

\title{Free fields in Malcev-Neumann series rings}

\author[V. O. Ferreira]{Vitor O. Ferreira}
\address{Department of Mathematics - IME, University of S\~ao Paulo,
Caixa Postal 66281, S\~ao Paulo, SP, 05314-970, Brazil}
\email{vofer@ime.usp.br}
\thanks{The first author is the corresponding author and was partially
supported by CNPq, Brazil (Grant 308163/2007-9)}

\author[E. Z. Fornaroli]{\'Erica Z. Fornaroli}
\address{Department of Mathematics, State University of Maring\'a,
Avenida Colombo, 5790, Maring\'a, PR, 87020-900, Brazil}%
\email{ezancanella@uem.br}
\thanks{The second author was supported by CNPq, Brazil (Grant 141505/2005-2)}

\author[J. S\'{a}nchez]{Javier S\'{a}nchez}
\address{Department of Mathematics - IME, University of S\~ao Paulo,
Caixa Postal 66281, S\~ao Paulo, SP, 05314-970, Brazil}%
\email{jsanchez@ime.usp.br}
\thanks{The third author was partially supported by FAPESP, Brazil (Proc.~2009/50886-0),
by the DGI and the European Regional Development
Fund, jointly, through Project MTM2008–06201–C02–01, and by the Comissionat per
Universitats i Recerca of the Generalitat de Catalunya, Project 2009 SGR 1389.}

\subjclass[2000]{16K40, 16W60, 16S10}

\keywords{Division rings, free fields, valuations, Malcev-Neumann series}

\date{16 August 2011}

\begin{abstract}
It is shown that the skew field of Malcev-Neumann series of an ordered
group frequently contains a free field of countable rank, \textit{i.e.}~the universal
field of fractions of a free associative algebra of countable rank. This is an application
of a criterion on embeddability of free fields on skew fields which are
complete with respect to a valuation function, following K.~Chiba.
Other applications to skew Laurent
series rings are discussed. Finally, embeddability questions on free fields
of uncountable rank in Malcev-Neumann series rings are also considered.
\end{abstract}

\maketitle

\section*{Introduction}

The existence of free objects in skew fields has attracted considerable
attention since Lichtman \cite{aL77} conjectured that the multiplicative group
of a noncommutative skew field contains noncyclic free subgroups.
A related problem, regarding free subalgebras inside skew fields,
has also been the subject of investigation although somewhat less intensely.
And free group algebras have been
conjectured to exist in finitely generated skew fields which are infinite-dimensional
over their centres. We refer to the survey article \cite{GSpp} and the references therein.

More recently, K.~Chiba \cite{kC03} has considered complete skew fields
with respect to a valuation function and proved that they contain free fields
provided they are infinite-dimensional over their centres. Moreover, G.~Elek \cite{gE06}
has considered free subfields of skew fields in the context of amenability.

In this paper, we present some improvements on Chiba's results which
yield conditions under which skew fields of Malcev-Neumann series
of ordered groups are shown to contain free fields of countable rank. These valuation
methods are later also applied to skew Laurent series rings.

A remark  of A.~Lichtman on the nonexistence of free fields in the field
of fractions of group algebras of torsion free polycyclic-by-finite groups
is presented in Section 4, showing that the free algebras that are known
to exist in these skew fields do not generate free fields.

A final section is devoted to the generalization of the valuation methods
used earlier in order to guarantee the presence of free fields of
uncountable rank in some Malcev-Neumann series rings.

\section{Definitions and notation}

In what follows the terms ``division ring'' and ``skew field'' will
be used interchangeably. Occasionally, we shall omit the adjective
``skew''. A skew field which is commutative will always be called a
``commutative field''.

Given a skew field $D$, a subfield $K$ of $D$ and a set $X$,
the \emph{free $D_K$-ring} on $X$ is defined to be the ring
$\fr{D}{K}{X}$ generated by $X$ over $D$ satisfying the relations
$ax=xa$ for all $a\in K$ and $x\in X$. It is well known that
$\fr{D}{K}{X}$ is a fir and, thus, has a universal field of
fractions $\ff{D}{K}{X}$, called the \emph{free $D_K$-field} on $X$.
(We refer to \cite{pC85} for the theory of firs and their universal
fields of fractions.) The free $D_D$-ring on $X$ will be called the
free $D$-ring on $X$ and will be denoted by $\fr{D}{}{X}$. Its
universal field of fractions, the free $D$-field, will be denoted by
$\ff{D}{}{X}$.

We shall be looking at valuation functions on skew fields. Let $R$
be a ring and let $(G,<)$ be a (not necessarily abelian) ordered group
with operation denoted additively. A symbol $\infty$ is adjoined
to $G$ and in $G_{\infty} = G\cup\{\infty\}$ the operation and order
are extended in such a way that $x+\infty=\infty+x=\infty+\infty=\infty$
and $x<\infty$, for all $x\in G$. By a \emph{valuation on $R$ with
values in $G$} one understands a map $\nu\colon R\rightarrow
G_{\infty}$ satisfying
\renewcommand{\theenumi}{\roman{enumi}}
\begin{enumerate}
  \item $\nu(a)=\infty$ if and only if $a=0$,
  \item $\nu (ab)=\nu(a)+ \nu(b)$, for all $a,b \in R$,\label{val1}
  \item $\nu(a+b)\geq \min\{\nu(a), \ \nu(b)\}$, for all $a,b \in R$.
\end{enumerate}
The valuation $\nu$ is said to be \emph{trivial} if $\nu(a)=0$, for all $a\in R, a\ne 0$.
When $R$ is a skew field and $R^{\times}$ denotes the set of all nonzero elements
of $R$, $\nu(R^{\times})$ is a subgroup of $G$, called the \emph{value group} of $\nu$.
A valuation on a skew field with value group order-isomorphic to the ordered
group of the integer numbers is said to be \emph{discrete}.
\renewcommand{\theenumi}{\arabic{enumi}}

We shall be mainly concerned with valuations on skew fields with
values in $\R$, the ordered additive group of the real
numbers with its usual order. If $D$ is a skew field and
$\nu\colon D\rightarrow\Ri$ is a valuation, we can define a metric $d$ on $D$
by choosing a real constant $c\in (0, 1)$ and letting
$d(a,b)=c^{\nu(a-b)}$, for all $a,b\in D$.
The topology so defined on $D$ is independent of the choice of the
constant $c$, and the completion $\widehat{D}$ of the metric space
$D$ is again a skew field with a valuation $\hat{\nu}\colon\widehat{D}\rightarrow\Ri$
such that $\widehat{D}$ and $\hat{\nu}$ are extensions of $D$ and $\nu$,
respectively. We shall refer to $\widehat{D}$ as the
\emph{completion of $D$ with respect to the valuation $\nu$}.

An example is the following. Let $R$ be a ring with a central
regular element $t$ such that $\bigcap{t^nR}=0$ and $R/tR$ is a
domain. If we define, for each $a\in R$, $\nu(a)=\sup\{n : a\in
t^nR\}$ then $\nu$ is a valuation on $R$, called the \emph{$t$-adic
valuation}. In particular, if $R$ is a domain and if $R[z]$ is the
polynomial ring over $R$ on the indeterminate $z$, $R[z]$ has a
$z$-adic valuation. If, moreover, $R$ is a division ring, then
$R[z]$ is an Ore domain with field of fractions $R(z)$, the field of
rational functions on $z$. The $z$-adic valuation on $R[z]$ extends
to a discrete valuation $\nu$ on $R(z)$ and the completion of $R(z)$
with respect to $\nu$ is just the field of Laurent series $R((z))$.

\medskip

The following further notions regarding ordered groups will be needed in
the sequel. Details can be found in \cite{BMR77} or \cite{lF63}, for example.

Given an ordered group $(G,<)$, a subgroup $H$ of $G$ is said to
be \emph{convex} if for all $x,z\in H$ and $y\in G$, the inequalities
$x\leq y\leq z$ imply that $y\in H$. If a convex subgroup $N$ is normal,
then $G/N$ has a natural order inherited from that on $G$, and the
convex subgroups of $G/N$ are in one-to-one correspondence with those
of $G$ which contain $N$.

The ordered group $(G,<)$ is said to be \emph{archimedean} if $\{0\}$
and $G$ are the only convex subgroups of $G$. An archimedean group
is known to be order isomorphic to a subgroup of the additive group $\R$
of the real numbers with its natural order.

The set of convex subgroups of an ordered group $(G,<)$ is totally
ordered with respect to inclusion, and we say that a pair $(N,H)$ of
convex subgroups of $G$ is a \emph{convex jump} if $N\subsetneqq H$ and there
are no convex subgroups lying properly between $N$ and $H$. Given
a convex jump $(N,H)$, $N$ is known to be normal in $H$ and, therefore, $H/N$ is
order isomorphic to a nontrivial subgroup of $\R$.

\medskip

Finally, we recall that given a skew field $K$ and an ordered group $(G,<)$,
the \emph{Malcev-Neumann series ring} $K((G,<))$ is defined to be the set
of all formal series $f=\sum_{x\in G}xa_x$, with $a_x\in K$, whose
\emph{support} $\supp(f)=\{x\in G : a_x\ne 0\}$ is a well-ordered subset
of $G$. The operations of $K$ and $G$, together with the relations $ax=xa$, for
all $a\in K$ and $x\in G$ induce a multiplication in $K((G,<))$, with respect
to which $K((G,<))$ is a field. Clearly, $K((G,<))$ contains the
group ring $K[G]$ of $G$ over $K$. We denote by $K(G)$ the subfield of $K((G,<))$
generated by $K[G]$. In Section~\ref{sec:mn}, we shall encounter a more general construction.

\renewcommand{\theenumi}{\arabic{enumi}}

\section{Free fields in valued division rings}\label{sec:gendr}

We start by remarking that Chiba's main result in \cite{kC03}
holds in a more general setting, namely, we can consider
skew fields having a valuation with noninteger values. This
remark will be used in the following section in order to
produce embeddings of free fields in Malcev-Neumann skew
fields of arbitrary ordered groups.

\medskip

The proof of the following result can be extracted from the proof of
\cite[Theorem~1]{kC03}.

\begin{theorem}\label{th:chiba}
  Let $D$ be a skew field with infinite centre and let $K$ be a subfield of
  $D$ which is its own bicentralizer and whose centralizer $K'$ in $D$ is
  such that the left $K$-space $KcK'$ is infinite-dimensional,
  for all $c\in D^{\times}$. Suppose that $\nu\colon D\rightarrow \Ri$ is a valuation
  satisfying the condition that there exists a nonzero element $t$ of $K'$ with $\nu(t)>0$.
  Then for every infinite countable set $\Sigma$ of full matrices over the
  free $D_K$-ring $\fr{D}{K}{X}$ there exists a
  $\Sigma$-inverting homomorphism from $\fr{D}{K}{X}$ into
  the completion $\widehat{D}$ of $D$ with respect to the valuation $\nu$.\hfill\qed
\end{theorem}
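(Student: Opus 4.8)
The plan is to construct, by successive approximation inside the complete valued skew field $\widehat D$, a single specialization of the variables $X$ at elements of the centralizer $K'$ of $K$ that inverts every matrix of $\Sigma$; sending the variables into $K'$ is precisely what makes the specialization compatible with the defining relations $ax=xa$ $(a\in K,\ x\in X)$ of $\fr{D}{K}{X}$.

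First I would isolate the two properties of $\widehat D$ that drive the limiting argument. Write $\widehat\nu$ for the extended valuation and apply it entrywise to matrices. (a) The set $GL_m(\widehat D)$ of invertible $m\times m$ matrices is open for the valuation topology: if $B\in GL_m(\widehat D)$ and $B'$ has $\widehat\nu(B_{ij}-B'_{ij})$ large enough for all $i,j$, then $B'=B\bigl(I-B^{-1}(B-B')\bigr)$, and $I-N$ is invertible whenever every entry of $N$ has sufficiently large valuation (the geometric series $\sum N^k$ converges, $\widehat D$ being complete). (b) Cauchy sequences in $\widehat D$ converge. These are exactly the features of the valuation used in \cite{kC03}, and none of them requires $\widehat\nu$ to be $\Z$-valued, which is the only respect in which our statement goes beyond Chiba's.

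The heart of the matter is the one-matrix step, which is extracted from the proof of \cite[Theorem~1]{kC03}: if $A$ is a full $m\times m$ matrix over $\fr{D}{K}{X}$ involving only $x_1,\dots,x_r$ and $c_1,\dots,c_r\in K'$ are given, then there exist $d_1,\dots,d_r\in K'$ and $N_0\in\N$ such that $A(c_1+t^{N}d_1,\dots,c_r+t^{N}d_r)$ is invertible in $\widehat D$ for all $N\ge N_0$. One regards $A(c_1+sd_1,\dots,c_r+sd_r)$, for a central indeterminate $s$, as a matrix over the polynomial ring $D[s]$, expands it in powers of $s$, and uses fullness of $A$ over the fir $\fr{D}{K}{X}$ to see that for a generic choice of the $d_j$ some ``leading coefficient'' of this expansion is invertible; substituting $s=t^{N}$ and using $t\in K'$ with $\nu(t)>0$ then keeps the perturbation inside $K'$, sends its valuation to $\infty$ with $N$, and realizes that coefficient as the $\widehat\nu$-dominant term, forcing invertibility in $\widehat D$. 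The genericity of the $d_j$ is supplied by the structural hypotheses: that $K$ is its own bicentralizer and that each $KcK'$ is infinite-dimensional over $K$ guarantee that the finitely many conditions the $d_j$ must avoid are genuinely nontrivial, and the infinite centre of $D$ provides enough values to avoid them (a nonzero polynomial over an infinite commutative field has a non-root). I expect essentially all the difficulty of the theorem to sit in this step, and I do not plan to improve on Chiba's handling of it.

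Granting that, the rest is bookkeeping. Enumerate $\Sigma=\{A_1,A_2,\dots\}$ and let $x_1,x_2,\dots$ list the countably many variables occurring in $\Sigma$; start from $\phi_0(x_i)=0\in K'$. Given a specialization $\phi_k\colon x_i\mapsto a_i^{(k)}\in K'$ inverting $A_1,\dots,A_k$, apply the one-matrix step to $A_{k+1}$ with base point the current values of its variables, choosing the exponent $N$ so large both that the perturbation moves each $a_i^{(k)}$ by an element of valuation exceeding a prescribed bound (for convergence) and that, by (a), it preserves the invertibility of $\phi_k(A_1),\dots,\phi_k(A_k)$; this yields $\phi_{k+1}$. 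By (b) each sequence $(a_i^{(k)})_k$ converges to some $a_i\in\widehat D$, and $a_i$ centralizes $K$ since every $a_i^{(k)}$ does and multiplication is continuous. Setting $\phi(x_i)=a_i$ (and, say, $\phi(x)=0$ for the remaining variables) and using the universal property of $\fr{D}{K}{X}$ — $\phi$ fixes $D$ and each $\phi(x)$ commutes with $K$ — extends $\phi$ to a ring homomorphism $\fr{D}{K}{X}\to\widehat D$. For each $k$, $\phi(A_k)$ is a perturbation of the invertible matrix $\phi_k(A_k)$ by entries of large valuation (by the control exercised at each subsequent stage), so $\phi(A_k)\in GL_m(\widehat D)$ by (a); hence $\phi$ is $\Sigma$-inverting, as required.
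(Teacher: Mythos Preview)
Your proposal is correct and follows the same route as the paper, which in fact gives no proof at all beyond the remark that the argument ``can be extracted from the proof of \cite[Theorem~1]{kC03}''; you have simply supplied a sketch of that extraction, correctly isolating that the passage from $\Z$-valued to $\R$-valued valuations affects neither the openness of $GL_m(\widehat D)$ nor the convergence of the approximation scheme. Your description of the one-matrix step is a bit heuristic (Chiba's actual mechanism, visible in the proof of Theorem~\ref{theo:freefielduncountablerank}, is to note that $x\mapsto c+tx$ is a $D$-ring automorphism of $\fr{D}{K}{X}$, so $A(c+tx)$ remains full, and then invoke Lemma~\ref{lem:specializationChiba} to specialize $x$ in $L=\{u\in K':\nu(u)=0\}$), but since you explicitly defer to Chiba's handling there, this is not a gap.
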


(In particular, it follows that when $D$ and $X$ are infinite countable,
$\nu$ is a discrete valuation and $\Sigma$ is the set of all full
matrices over $\fr{D}{K}{X}$ the completion $\widehat{D}$ contains
the free field $\ff{D}{K}{X}$. That is the statement of
\cite[Theorem~1(1)]{kC03}.)

The following consequence of Theorem~\ref{th:chiba} should be
compared with \cite[Corollary~1(1)]{kC03}.

\begin{theorem}\label{th:freefield}
  Let $D$ be a skew field with infinite centre $C$ such that
  the dimension of $D$ over $C$ is infinite. If there exists a
  nontrivial valuation $\nu\colon D\rightarrow\Ri$,
  then $\widehat D$ contains a free
  field $\ff{C}{}{X}$ on an infinite countable set $X$.
\end{theorem}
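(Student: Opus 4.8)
The plan is to deduce Theorem~\ref{th:freefield} from Theorem~\ref{th:chiba} by making a judicious choice of the subfield $K$ and the element $t$. The obvious temptation is to take $K=C$, the centre of $D$, but then the centralizer $K'$ of $K$ in $D$ is all of $D$, and the hypothesis of Theorem~\ref{th:chiba} requires a nonzero $t\in K'=D$ with $\nu(t)>0$, which is automatic as soon as $\nu$ is nontrivial. However, with $K=C$ one must still check that $C$ is its own bicentralizer in $D$ (true, since the bicentralizer of the centre is the centre) and that $C c C'=CcD$ is infinite-dimensional as a left $C$-space for every $c\in D^{\times}$; since $CcD=cD=D$ (as $c$ is invertible) and $\dim_C D=\infty$ by hypothesis, this holds. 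Thus $K=C$ satisfies every requirement of Theorem~\ref{th:chiba}, except that we must also arrange $X$ and the matrix set $\Sigma$ appropriately.

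Concretely, first I would fix an infinite countable set $X$. The free $D_C$-ring here is the free $D$-ring $\fr{D}{}{X}=D\langle X\rangle$ in the notation of the paper (since $K=C$ forces the commuting relations $ax=xa$ for all $a\in C$, which already hold in $D$). Its universal field of fractions is $\ff{D}{}{X}$. Next I would take $\Sigma$ to be the set of all full matrices over $\fr{D}{}{X}$; this set is countable provided $D$ is countable, but in general it need not be — so a preliminary reduction is needed. Since $D$ has an infinite centre $C$ and $\dim_C D=\infty$, we may pass to a countable subfield: choose a countably infinite subset of $D$ that together with a countable subfield of $C$ generates a countable subfield $D_0\subseteq D$ with infinite centre and infinite dimension over its centre, on which $\nu$ restricts nontrivially. (One picks countably many elements witnessing the infinite dimension and closes up under the field operations and under $\nu$-comparisons; care is needed to keep the centre infinite, which one secures by including an infinite subfield of $C$.) Replacing $D$ by $D_0$ and $\nu$ by its restriction, we may assume $D$ is countable, hence $\fr{D}{}{X}$ is countable and $\Sigma$ is an infinite countable set of full matrices.

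Now Theorem~\ref{th:chiba}, applied with $K=C$, $K'=D$, and any nonzero $t\in D$ with $\nu(t)>0$ (which exists because $\nu$ is nontrivial; if $\nu$ takes a negative value on some $s$, take $t=s^{-1}$), yields a $\Sigma$-inverting homomorphism $\varphi\colon\fr{D}{}{X}\to\widehat{D}$. Since $\Sigma$ consists of \emph{all} full matrices over the fir $\fr{D}{}{X}$, a $\Sigma$-inverting homomorphism extends to the universal field of fractions, so $\varphi$ induces an embedding $\ff{D}{}{X}\hookrightarrow\widehat{D}$ (the map on the free field is automatically injective, being a homomorphism of skew fields). In particular $\widehat{D}$ contains $\ff{D}{}{X}$, and since $C\subseteq D$ the free $C$-field $\ff{C}{}{X}$ embeds into $\ff{D}{}{X}$ (specialization of the scalar field), hence into $\widehat{D}$. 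Replacing $\widehat{D}$ by the completion of the original $D$ — which contains $\widehat{D_0}$ — gives the conclusion.

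The main obstacle I anticipate is the countability reduction: one must produce a countable subfield $D_0$ of $D$ that simultaneously (a) has infinite centre, (b) is infinite-dimensional over its own centre, and (c) carries a nontrivial restriction of $\nu$, while also ensuring that the bicentralizer/centralizer conditions needed by Theorem~\ref{th:chiba} survive — taking $K=C\cap D_0$ rather than the full centre of $D_0$ requires checking that $C\cap D_0$ is its own bicentralizer in $D_0$ and that $(C\cap D_0)\,c\,D_0$ is infinite-dimensional over $C\cap D_0$ for all $c\in D_0^{\times}$. Properly arranging these closure conditions is the only delicate point; everything else is a direct citation of Theorem~\ref{th:chiba} and the universal property of $\ff{D}{}{X}$.
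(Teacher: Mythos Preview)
Your choice $K=C$ in Theorem~\ref{th:chiba} is exactly what the paper does, and your verification of the bicentralizer and $KcK'$ conditions is correct. The divergence, and the gap, is entirely in how you handle countability of $\Sigma$.

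You propose shrinking $D$ to a countable subfield $D_0$. Two concrete problems arise. First, the bicentralizer condition genuinely fails for $K=C\cap D_0$: the centralizer of any central subfield is all of $D_0$, so its bicentralizer is the full centre $Z(D_0)$, which will typically strictly contain $C\cap D_0$. Forcing $Z(D_0)=C\cap D_0$ requires, for every $z\in D_0\setminus C$, throwing in a witness $d\in D$ with $zd\neq dz$; this needs an $\omega$-length closure argument you have not supplied, and only after that does $[D_0:Z(D_0)]=\infty$ follow from your $C$-independent set. Second, your final step is wrong as written: once you have replaced $D$ by $D_0$, the original centre $C$ need not lie in $D_0$ (it may be uncountable), so ``$C\subseteq D$ hence $\ff{C}{}{X}\hookrightarrow\ff{D}{}{X}$'' is not available. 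You would instead need to pass down to the prime field and then invoke \cite[Lemma~9]{kC03} to climb back up to $C$ inside $\widehat{D}$ --- a step you do not mention.

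The paper avoids all of this by shrinking the \emph{coefficient field of the free ring} rather than $D$: choose an infinite countable subfield $F\subseteq C$, take $\Sigma$ to be all full matrices over $\fr{F}{}{X}$ (automatically countable), and observe via \cite[Theorem~6.4.6]{pC95} that the inclusion $\fr{F}{}{X}\hookrightarrow\fr{D}{C}{X}$ is honest, so $\Sigma$ consists of full matrices over $\fr{D}{C}{X}$. Theorem~\ref{th:chiba} then applies to $D$ itself with $K=C$, giving a $\Sigma$-inverting map $\fr{D}{C}{X}\to\widehat{D}$ whose restriction to $\fr{F}{}{X}$ extends to $\ff{F}{}{X}\hookrightarrow\widehat{D}$; finally \cite[Lemma~9]{kC03} upgrades $F$ to $C$. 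No Skolem-type construction of $D_0$ is needed.
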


\begin{proof}
  Let $X$ be an infinite countable set and let $F$ be an infinite countable subfield of $C$.
  Let $\Sigma$ denote the set of all full matrices over $\fr{F}{}{X}$.
  Since both $F$ and $X$ are infinite countable, $\Sigma$ is infinite countable. Moreover,
  $\Sigma$ is a set of full matrices over $\fr{D}{C}{X}$ because the natural
  inclusion $\fr{F}{}{X} \hookrightarrow \fr{D}{C}{X}$
  is an honest map, by \cite[Theorem~6.4.6]{pC95}. It follows
  from Theorem~\ref{th:chiba} that there exists a $\Sigma$-inverting
  homomorphism $\fr{D}{C}{X} \rightarrow \widehat{D}$ and, therefore,
  the composed map
  $$
    \fr{F}{}{X}\hookrightarrow\fr{D}{C}{X}\longrightarrow\widehat{D}
  $$
  is a $\Sigma$-inverting $F$-ring homomorphism which extends to an $F$-ring homomorphism
  $\ff{F}{}{X}\rightarrow\widehat{D}$. Thus $X$ freely generates a free
  subfield of $\widehat{D}$ over $F$. Since $F$ is a central subfield of $D$,
  $X$ freely generates a free subfield of $\widehat{D}$ over the prime field of $D$ and,
  \textit{a fortiori}, over $C$, by \cite[Lemma~9]{kC03}.
\end{proof}

This provides a more direct proof of the following version of \cite[Corollary~1(2)]{kC03}.

\begin{corollary}\label{cor:Laurentseries}
  Let $K$ be a skew field with centre $F$ such that the
  dimension of $K$ over $F$ is infinite. Then the skew field of
  Laurent series $K((z))$ in $z$ over $K$ contains a free field
  $\ff{F}{}{X}$ on an infinite countable set $X$.
\end{corollary}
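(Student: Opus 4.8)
The plan is to deduce Corollary~\ref{cor:Laurentseries} directly from Theorem~\ref{th:freefield} by exhibiting a nontrivial valuation on a suitable skew field whose completion is $K((z))$. First I would invoke the construction recalled at the end of Section~1: since $K$ is a division ring, the polynomial ring $K[z]$ is an Ore domain whose field of fractions is the rational function field $K(z)$, and the $z$-adic valuation on $K[z]$ extends to a discrete (hence nontrivial) valuation $\nu$ on $K(z)$ whose completion is precisely $K((z))$. So I will apply Theorem~\ref{th:freefield} with $D = K(z)$ and $\nu$ this $z$-adic valuation.

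The point that needs checking is that $D = K(z)$ satisfies the hypotheses of Theorem~\ref{th:freefield}: its centre $C$ must be infinite, and $[D:C]$ must be infinite. I would argue as follows. The element $z$ is central in $K[z]$ and hence in $K(z)$, so the commutative field $F(z)$ of rational functions over the centre $F$ of $K$ sits inside the centre $C$ of $K(z)$; in particular $C$ is infinite. For the dimension, one checks that $[K(z):C]$ is at least $[K(z):F(z)]$, and the latter is infinite because $[K:F]$ is infinite and $K$ embeds $F(z)$-linearly into $K(z)$ (for instance, an $F$-linearly independent family in $K$ stays $F(z)$-linearly independent in $K(z)$, since a nontrivial $F(z)$-relation can be cleared of denominators to a nontrivial relation with coefficients in $F[z]$, and then reduced to a nontrivial $F$-relation by looking at lowest-degree coefficients). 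Hence $[D:C] = \infty$.

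With these verifications in place, Theorem~\ref{th:freefield} gives that $\widehat{D} = K((z))$ contains a free field $\ff{C}{}{X}$ on an infinite countable set $X$, where $C$ is the centre of $K(z)$. Finally, since $F \subseteq F(z) \subseteq C$ and $F$ is central in $K(z)$, the free field over $C$ restricts to a free field over $F$: a family generating a free field over a larger central subfield generates one over any smaller central subfield (again by \cite[Lemma~9]{kC03}, as used in the proof of Theorem~\ref{th:freefield}). This yields a free field $\ff{F}{}{X}$ inside $K((z))$, as required.

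The only real obstacle is the dimension computation $[K(z):C] = \infty$ together with pinning down that $C$ is infinite; everything else is a direct citation of the machinery already set up. I do not expect this to be hard, but it is the one place where an honest (if short) argument about $F(z)$-linear independence in $K(z)$ is needed rather than a black-box appeal.
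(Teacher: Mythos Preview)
Your overall strategy is exactly the paper's: apply Theorem~\ref{th:freefield} to $D=K(z)$ with the $z$-adic valuation, then descend from the centre of $K(z)$ to $F$ via \cite[Lemma~9]{kC03}. The paper proves $[K(z):F(z)]=\infty$ by the embedding $K\otimes_F F(z)\hookrightarrow K(z)$, which is the same content as your clearing-denominators argument.

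There is, however, a genuine gap in your dimension step. You establish $F(z)\subseteq C$, and then assert that ``$[K(z):C]$ is at least $[K(z):F(z)]$''. But the containment $F(z)\subseteq C$ gives the \emph{opposite} inequality: from $[K(z):F(z)]=[K(z):C]\,[C:F(z)]$ one gets $[K(z):C]\le [K(z):F(z)]$, which is useless for concluding $[K(z):C]=\infty$. Knowing only $F(z)\subseteq C$ and $[K(z):F(z)]=\infty$ does not rule out, a priori, that $[K(z):C]$ is finite with $[C:F(z)]=\infty$.

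The paper closes this gap by actually identifying the centre: it cites \cite[Proposition~2.1.5]{pC95} to get $C=F(z)$, so that $[K(z):C]=[K(z):F(z)]=\infty$ on the nose. If you prefer to avoid that citation, you can argue directly that $C\subseteq F((z))$ (any central element of $K(z)$, viewed in $K((z))$, has each coefficient commuting with all of $K$, hence lying in $F$), and then your $F$-linearly independent family in $K$ is automatically $C$-linearly independent by the same lowest-coefficient trick. Either way, some control on $C$ beyond $F(z)\subseteq C$ is needed; once you add that, your proof coincides with the paper's.
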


\begin{proof}
  Let $\omega$ denote the valuation on $K(z)$ which extends the $z$-adic
  valuation on $K[z]$. Since there is a natural embedding
  $K\otimes_F F(z)\hookrightarrow K(z)$, it follows that $[K(z):F(z)]\geq
  [K\otimes_FF(z):F(z)]=[K:F]$ and, hence, $K(z)$ is infinite dimensional over $F(z)$, which is
  its (infinite) centre, by \cite[Proposition~2.1.5]{pC95}. So Theorem~\ref{th:freefield}
  applies and, therefore, $K((z))$ contains a free field $\ff{F(z)}{}{X}$
  on an infinite countable set $X$. Finally, by \cite[Lemma~9]{kC03}, $K((z))$
  contains a free field $\ff{F}{}{X}$.
\end{proof}

Now let $K$ be a skew field and let $(G,<)$ be a nontrivial ordered group
with operation denoted multiplicatively.
Given an element $z\in G$ with $z>1$, let $H$ denote the subgroup of $G$
generated by $z$. The subfield $K((H,<))$ of $K((G,<))$ is isomorphic to the Laurent
series skew field $K((z))$. So, if the dimension of $K$ over its centre $F$
is infinite, Corollary~\ref{cor:Laurentseries} provides an embedding
of the free field $\ff{F}{}{X}$ into $K((H,<))$ and, therefore, into
$K((G,<))$. This proves the next result.

\begin{corollary}\label{cor:MNbig}
  Let $K$ be a skew field with centre $F$ such that the dimension of $K$
  over $F$ is infinite and let $(G,<)$ be a nontrivial ordered group.
  Then the Malcev-Neumann skew field $K((G,<))$ contains a free field
  $\ff{F}{}{X}$ on an infinite countable set $X$.\hfill\qed
\end{corollary}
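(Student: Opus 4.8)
The plan is to reduce the statement to the Laurent series case handled in Corollary \ref{cor:Laurentseries} by locating a copy of a Laurent series skew field inside $K((G,<))$. Since $(G,<)$ is nontrivial, there exists an element $z \in G$ with $z \ne 1$, and by replacing $z$ with $z^{-1}$ if necessary we may assume $z > 1$. Let $H = \langle z \rangle$ be the cyclic subgroup of $G$ generated by $z$; it is infinite cyclic because $z$ has infinite order in an ordered group (no nontrivial element of an ordered group has finite order), and it inherits the order from $G$. The first step is to identify $K((H,<))$ as a subfield of $K((G,<))$: every element of $K((H,<))$ is a formal series supported on a well-ordered subset of $H$, hence on a well-ordered subset of $G$, so it lies in $K((G,<))$, and this inclusion respects the ring operations since the multiplication on $K((G,<))$ is induced from that of $G$ and $K$.

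The second step is to recognize $K((H,<))$ as a Laurent series ring. Since $H$ is infinite cyclic, generated by $z > 1$, it is order-isomorphic to $(\mathbb{Z}, <)$ with $z \mapsto 1$; the well-ordered subsets of $H$ correspond exactly to the subsets of $\mathbb{Z}$ that are bounded below, which is precisely the support condition defining Laurent series. Thus $K((H,<)) \cong K((z))$ as skew fields, where $z$ plays the role of the indeterminate. This is the point where one must be slightly careful about the identification being an isomorphism of rings and not merely of the underlying ordered-monoid structure, but the relations $az = za$ for $a \in K$ hold on both sides, so the identification is routine.

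With these identifications in place, the final step is a direct application of Corollary \ref{cor:Laurentseries}. The hypotheses on $K$ — that its centre is $F$ and $[K:F] = \infty$ — are exactly those required by that corollary, so $K((z)) \cong K((H,<))$ contains a free field $\ff{F}{}{X}$ on an infinite countable set $X$. Composing the embedding $\ff{F}{}{X} \hookrightarrow K((H,<))$ with the inclusion $K((H,<)) \hookrightarrow K((G,<))$ from the first step yields the desired embedding of $\ff{F}{}{X}$ into $K((G,<))$. I do not expect any substantial obstacle here: the argument is essentially a packaging of already-established facts, and the only mild subtlety is verifying that the inclusion $K((H,<)) \subseteq K((G,<))$ is a well-defined ring homomorphism, which follows immediately from the construction of the Malcev-Neumann multiplication.
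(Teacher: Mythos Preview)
Your proposal is correct and follows essentially the same approach as the paper: pick $z\in G$ with $z>1$, let $H=\langle z\rangle$, identify $K((H,<))$ with the Laurent series field $K((z))$ inside $K((G,<))$, and invoke Corollary~\ref{cor:Laurentseries}. The paper's argument is stated in the paragraph immediately preceding the corollary, and your write-up merely adds some routine justifications (infinite order of $z$, the inclusion being a ring map) that the paper leaves implicit.
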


In the next section we shall see that most Malcev-Neumann skew fields
contain free fields, even when the field of coefficients is not large
compared with its centre.

\section{Free fields in Malcev-Neumann series rings}\label{sec:mn}

In this section we consider arbitrary Malcev-Neumann skew fields
containing crossed product rings. These fields will be shown to
contain free fields very frequently. In particular, we shall
see that Malcev-Neumann series over torsion-free nilpotent
groups will have this property. Finally, some of the Malcev-Neumann skew fields
considered will also be shown to be nonamenable in the sense
of \cite{gE06}.

First we recall the construction of rings of Malcev-Neumann
series containing crossed products. We follow \cite[Chapter~1]{dP89}.

Given a skew field $K$ and a group $G$, a \emph{crossed product}
of $G$ over $K$ is an associative ring $K[G;\sigma,\tau]$
which contains $K$ and which is a free right $K$-module
with basis $\overline{G}=\{\bar{x} : x\in G\}$, a copy of $G$.
Thus, each element $f\in K[G;\sigma,\tau]$ is uniquely expressed as
$f=\sum_{x\in G}\bar{x}a_x$, where $a_x\in K$, for all $x\in G$, and
the support $\supp f=\{x\in G : a_x\neq 0\}$ of $f$ is finite.
Here, multiplication satisfies
$$
\bar{x}\bar{y}=\overline{xy}\tau(x,y)\quad\text{and}\quad
a\bar{x}=\bar{x}a^{\sigma(x)},
$$
for all $x,y\in G$ and $a\in K$, where $\tau\colon G\times G\rightarrow K^{\times}$
and $\sigma\colon G\rightarrow \Aut(K)$ are maps. We can always
assume that the identity element of $K[G;\sigma,\tau]$ is $\bar{1}$
and that $K$ is a subfield of $K[G;\sigma,\tau]$. If $H$ is a subgroup
of $G$, then the crossed product $K[H;\sigma_{\mid H},\tau_{\mid H\times H}]$ embeds
into $K[G;\sigma,\tau]$. This subring will be denoted simply by $K[H;\sigma,\tau]$.

Suppose that $(G,<)$ is an ordered group and that $K[G;\sigma,\tau]$ is
a crossed product of $G$ over a skew field $K$. Then we can define a new
ring, denoted $K((G;\sigma,\tau,<))$ and called \emph{Malcev-Neumann
series ring}, into which $K[G;\sigma,\tau]$ embeds. The elements of
$K((G;\sigma,\tau,<))$ are uniquely expressed as $f=\sum_{x\in
G}\bar{x}a_x$, with $a_x\in K$, for all $x\in G$, and $\supp f$ a
well-ordered subset of $G$. Addition and multiplication are defined
extending the ones in $K[G;\sigma,\tau]$. Thus, given
$f=\sum\limits_{x\in G}\bar{x}a_x$ and $g=\sum\limits_{x\in
G}\bar{x}b_x$ in $K((G;\sigma,\tau,<))$, addition and
multiplication are given by
$$
f+g = \sum_{x\in G}\bar{x} (a_x+b_x),\qquad f\cdot g = \sum_{x\in
G}\bar{x}\Big(\sum_{yz=x} \tau(y,z)a_y^{\sigma(z)} b_z\Big).
$$

It is a known fact that $K((G;\sigma,\tau,<))$ is a skew field
provided $K$ is (\cite{aM48,bN49}). If $H$
is a subgroup of $G$, the field $K((H;\sigma_{\mid H},\tau_{\mid H\times H},<))$
embeds into $K((G;\sigma,\tau,<))$ and will be denoted by $K((H;\sigma,\tau,<))$.

A few words are due on Malcev-Neumann series rings with regards to
the invariance of their construction. First, we remark that a
diagonal change of basis in $K[G;\sigma,\tau]$ gives rise to the
same series ring. Moreover, an order defined via an automorphism of
$G$ which is induced by an automorphism of $K[G;\sigma,\tau]$ will
produce a Malcev-Neumann series ring which is isomorphic to
$K((G;\sigma,\tau,<))$.

The \emph{field of fractions} of
$K[G;\sigma,\tau]$ inside $K((G;\sigma,\tau,<))$, \textit{i.e.}~the subfield
of $K((G;\sigma,\tau,<))$ generated as a skew field by
$K[G;\sigma,\tau]$, will be denoted by $K(G;\sigma,\tau)$. We
remark that $K(G;\sigma,\tau)$ does not depend on the order $<$ by
\cite[p.~183]{iH70}. When we have a group ring $K[G]$, we will
denote the Malcev-Neumann series ring by $K((G;<))$ and the corresponding
field of fractions of $K[G]$ by $K(G)$.

Now, let $K$ be a skew field and let $(G,<)$ be an ordered group
which contains a noncyclic free subgroup $H$ on a set $X$. Let
$K[G;\sigma,\tau]$ be a crossed product and let $P$ denote the prime
subfield of $K$. Because $H$ is a free group, after a diagonal change 
of basis, if necessary, it can be seen that the group algebra $P[H]$ is 
contained in $K[G;\sigma,\tau]$ and, correspondingly, that $P((H,<))$ is
contained in $K((G;\sigma,\tau,<))$. By a result of Lewin~\cite{jL74},
the field of fractions $P(H)$ of $P[H]$ inside $P((H,<))$ is isomorphic
to the free field $\ff{P}{}{X}$ --- so, in this case, we have an
embedding of a free field into $K(G,\sigma,\tau)\subseteq
K((G;\sigma,\tau,<))$. We shall see below that Malcev-Neumann series rings 
of a much larger class of ordered groups will have the property of 
containing free fields.

It is well known that the map $\omega\colon
K((G;\sigma,\tau,<))\rightarrow G_{\infty}$ given by
$\omega(0)=\infty$ and $\omega(f)=\min\supp f$, for all nonzero $f\in
K((G;\sigma,\tau,<))$, is a valuation of $K((G;\sigma,\tau,<))$ with
value group $G$. (Note that here we use multiplicative notation for the
operation on $G$ and we shall do so whenever considering this natural
valuation function.) If $(A,<')$ is another ordered group and $\pi\colon
(G,<)\rightarrow (A,<')$ is a surjective order homomorphism, then
$\nu\colon K((G;\sigma,\tau,<))\rightarrow A_{\infty}$, defined
by $\nu(0)=\infty$ and $\nu(f)=\pi\omega(f)$, for all nonzero $f\in
K((G;\sigma,\tau,<))$, is a valuation on $K((G;\sigma,\tau,<))$ with
value group $A$.

In order to prove the existence of a free field inside
$K((G;\sigma,\tau,<))$ using Theorem~\ref{th:freefield}, it is
enough to establish for some subgroup $H$ of $G$ the infinite
dimensionality of $K(H;\sigma,\tau)$ over its centre and that
$K((H;\sigma,\tau,<))$ contains the completion
$\widehat{K(H;\sigma,\tau)}$ of $K(H;\sigma,\tau)$ with respect to
some valuation function on $K(H;\sigma,\tau)$ with real values.
The former is established in the next lemma which
is proved in the same way as \cite[Lemma~10]{kC03}, the latter is
done in Lemma~\ref{le:completemns}.

\begin{lemma}\label{le:ffskg}
  Let $K$ be a skew field and let $(G,<)$ be a nonabelian ordered group.
  Then any field of fractions of a crossed product
  of $G$ over $K$ is infinite dimensional over its centre.
\end{lemma}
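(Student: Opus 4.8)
The plan is to reduce to the case of a free field on two generators living inside a suitable subfield, and then to transfer the argument of \cite[Lemma~10]{kC03}. Since $(G,<)$ is nonabelian, pick $g,h\in G$ with $gh\neq hg$, and let $H=\langle g,h\rangle$ be the subgroup they generate; it suffices to prove that $K(H;\sigma,\tau)$ is infinite dimensional over its centre, since $K(H;\sigma,\tau)$ embeds in $K(G;\sigma,\tau)$ and a subfield of finite codimension over the centre would force... — no, that last implication is false, so instead I would argue directly with $G$ itself but exploit the two noncommuting elements. Concretely, let $L$ denote the centre of the field of fractions $D:=K(G;\sigma,\tau)$, and suppose for contradiction that $[D:L]=n<\infty$.

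First I would observe that the natural valuation $\omega\colon K((G;\sigma,\tau,<))\to G_\infty$, with $\omega(f)=\min\supp f$, restricts to a valuation on $D$ whose value group contains the subgroup of $G$ generated by $\supp$ of elements of $K[G;\sigma,\tau]$, hence all of $G$ after inverting; in particular the value group of $\omega|_D$ is $G$ itself, a nonabelian group. On the other hand, if $z\in L^\times$ then $\omega(z)$ is central in the value group: for any $x\in G$ we have $\bar x z=z\bar x$, so $\omega(\bar x)+\omega(z)=\omega(z)+\omega(\bar x)$ in $G$, i.e.\ $\omega(z)$ commutes with every element of $G$. Thus $\omega(L^\times)\subseteq Z(G)$, which is a proper subgroup of $G$ since $G$ is nonabelian.

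The key step is then a counting/index argument on value groups. If $[D:L]=n$, a standard fact about valuations (the ``fundamental inequality'', valid in the division-ring setting for the value-group part) gives that $\omega(D^\times)/\omega(L^\times)$ is finite of order at most $n$; but $\omega(D^\times)=G$ and $\omega(L^\times)\subseteq Z(G)$, so $[G:Z(G)]\leq n<\infty$. A group with centre of finite index has finite commutator subgroup (Schur's theorem); moreover such a $G$ is an ordered group with $[G:Z(G)]$ finite, and since ordered groups are torsion-free while $G/Z(G)$ would then be a finite — hence trivial — torsion-free... wait, $G/Z(G)$ need not be torsion-free. The cleanest route: an ordered group has no nontrivial finite subgroups, so if $G/Z(G)$ is finite it is trivial, forcing $G=Z(G)$ abelian, contradicting our choice of $g,h$. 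Hence $[D:L]=\infty$.

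The main obstacle is making precise the inequality $[\,\omega(D^\times):\omega(L^\times)\,]\leq[D:L]$ in the noncommutative setting; I expect this follows by choosing, for distinct cosets $\omega(L^\times)\gamma_1,\dots,\omega(L^\times)\gamma_{m}$ in $\omega(D^\times)$, elements $d_i\in D^\times$ with $\omega(d_i)=\gamma_i$ and checking that $d_1,\dots,d_m$ are left $L$-linearly independent (a relation $\sum \ell_i d_i=0$ with not all $\ell_i=0$ forces two of the $\omega(\ell_i d_i)=\omega(\ell_i)+\gamma_i$ to be equal in the ordered group $G$, whence $\gamma_i-\gamma_j\in\omega(L^\times)$, a contradiction). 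This is exactly the mechanism of \cite[Lemma~10]{kC03}, and the remainder is the order-theoretic remark that a nontrivial finite quotient cannot occur for an ordered group.
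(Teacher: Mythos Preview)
Your valuation approach is sound for the field $D=K(G;\sigma,\tau)$ sitting inside $K((G;\sigma,\tau,<))$, but there are two gaps.

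\textbf{Scope.} The lemma asserts the result for \emph{any} field of fractions of $K[G;\sigma,\tau]$. Your argument uses the restriction of the Malcev--Neumann valuation $\omega$, which only makes sense once $D$ is embedded in $K((G;\sigma,\tau,<))$; you have therefore proved the statement only for the particular field $K(G;\sigma,\tau)$. When $K[G;\sigma,\tau]$ is Ore this is the unique field of fractions and you are done, but in the non-Ore case there may be other fields of fractions for which no such embedding (and hence no such valuation) is available. The paper handles this by splitting into cases: if $R$ is not Ore it contains a free algebra, and then \cite[Lemma~1]{MM91} transports a free subalgebra into any field of fractions $D$ over its centre, forcing $[D:C]=\infty$ without any valuation at all.

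\textbf{The finish.} Your ``cleanest route'' is not correct: the quotient $G/Z(G)$ is not a subgroup of $G$, so the fact that ordered groups have no nontrivial finite subgroups tells you nothing about it. You already wrote down the right tool---Schur's theorem---but applied it to the wrong object. The argument should read: $[G:Z(G)]<\infty$ implies $[G,G]$ is finite (Schur); since $[G,G]$ \emph{is} a subgroup of the torsion-free group $G$, it is trivial, so $G$ is abelian.

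\textbf{Comparison with the paper.} In the Ore case the paper argues differently and more directly: it shows that every central element $f\in C$ has $\supp f\subseteq Z(G)$ (a support computation from $\bar y f=f\bar y$), so $C\subseteq K((Z;\sigma,\tau,<))$; then for any $x\in G\setminus Z(G)$ the powers $\{\bar x^n:n\geq 0\}$ are linearly independent over $K((Z;\sigma,\tau,<))$, hence over $C$. Your route---$\omega(L^\times)\subseteq Z(G)$ plus the fundamental inequality $[\omega(D^\times):\omega(L^\times)]\leq[D:L]$ plus Schur---reaches the same conclusion by contradiction and is a legitimate alternative for $K(G;\sigma,\tau)$; the linear-independence verification you give for the fundamental inequality is fine (using that $\omega(L^\times)$ is central in $G$). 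Both arguments pivot on $Z(G)$, but the paper's avoids Schur and yields an explicit infinite independent set, while yours packages everything into the value-group index.
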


\begin{proof}
  Let $R=K[G;\sigma,\tau]$ be a crossed product of $G$ over $K$,
  let $D$ be a field of fractions of $R$ and let
  $C$ be the centre of $D$.

  We consider two cases. If $R$ is not an Ore
  domain, then, by \cite[Proposition~10.25]{tL99}, it contains a
  noncommutative free algebra
  over its centre. This implies that $D$ contains a
  free subalgebra over $C$, by \cite[Lemma~1]{MM91}, and, so,
  $[D:C]$ must be infinite.

  If $R$ is an Ore domain, then $D$ is the Ore
  field of fractions of $R$, and, hence $D=K(G;\sigma,\tau)
  \subseteq K((G;\sigma,\tau,<))$.

  Denote the centre of $G$ by $Z$. Now, since $R$ is an Ore domain, it
  is well-known that $K[Z;\sigma,\tau]$ must also be an Ore domain and, so,
  the Ore field of fractions $K(Z;\sigma,\tau)$
  of $K[Z;\sigma,\tau]$ is contained in $D$. We shall see, first, that
  $C\subseteq K((Z;\sigma,\tau,<))$.

  Indeed, let $f\in C$ and express $f$ as a series in $K((G;\sigma,\tau,<))$, say
  $f=\sum_{x\in G}\bar{x}a_x$. For each $y\in G$, we have $\bar{y}f=f\bar{y}$.
  Thus,
  $$
  \sum_{x\in G}\overline{yx}\tau(y,x)a_x=
  \bar{y}f=f\bar{y}=
  \sum_{x\in G}\overline{xy}\tau(x,y)a_x^{\sigma(y)}.
  $$
  Since $y(\supp f)=\supp (\bar{y}f)=\supp(f\bar{y})=(\supp f)y$, we
  get that $\supp f\subseteq Z$. Hence $C\subseteq
  K((Z;\sigma,\tau,<))$.

  Now, it is known that $G/Z$ is an orderable group (see, \textit{e.g.}~\cite[Theorem~2.24]{BMR77}).
  Since $G$ is not abelian, $G/Z$ is torsion free. If $x\in G\setminus Z$, then
  $\{\bar{x}^n : n\in\Z, n\geq 0\}$ is linearly independent over $K((Z;\sigma,\tau,<))$ inside
  $K((G;\sigma,\tau,<))$, and, therefore, over $C$. Since $\bar{x}\in D$,
  we get that $[D:C]=\infty$, as desired.
\end{proof}

\begin{lemma}\label{le:completemns}
  Let $(G,<)$ be an ordered group, let $K$ be a skew field and let
  $K[G;\sigma,\tau]$ be a crossed product of $G$ over $K$. Suppose that
  $\pi\colon G\rightarrow A$ is a surjective homomorphism of ordered groups, where
  $A$ is a nontrivial subgroup of the additive group $\R$ of the real numbers.
  Then $K((G;\sigma,\tau,<))$ is complete with respect to the valuation
  $\nu=\pi\omega\colon K((G;\sigma,\tau,<))\rightarrow A_{\infty}$.
\end{lemma}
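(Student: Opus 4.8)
Since $\nu$ comes from a metric, it suffices to show that every $\nu$-Cauchy sequence in $D:=K((G;\sigma,\tau,<))$ converges in $D$, and the plan is to build the limit by truncating series ``in the $\pi$-direction''. First note that, as $\pi$ is order-preserving, for $0\ne g$ one has $\nu(g)=\pi\omega(g)=\pi(\min\supp g)=\min\pi(\supp g)$. For $f=\sum_{x\in G}\bar x a_x\in D$ and $a\in A$, let $f_{\le a}=\sum_{\pi(x)\le a}\bar x a_x$; its support is contained in $\supp f$, hence well-ordered, so $f_{\le a}\in D$. The elementary fact underlying everything is that for $f,g\in D$ one has $f_{\le a}=g_{\le a}$ if and only if $\nu(f-g)>a$, together with the observation that truncation is idempotent and monotone: $(f_{\le a'})_{\le a}=f_{\le a}$ whenever $a\le a'$.

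Next I would construct the candidate limit. Let $(f^{(n)})_n$ be Cauchy. For each $a\in A$, Cauchyness provides $n(a)$ with $\nu(f^{(m)}-f^{(n)})>a$ for all $m,n\ge n(a)$; by the fact above, $(f^{(m)})_{\le a}$ does not depend on $m\ge n(a)$, so we may call it $g_a$. Monotonicity of truncation gives $(g_{a'})_{\le a}=g_a$ for $a\le a'$, so the $g_a$ form a coherent family; let $f$ be the formal series whose coefficient at $\bar x$ is the coefficient at $\bar x$ of $g_a$ for any $a\ge\pi(x)$. Then $f_{\le a}=g_a$ for every $a\in A$.

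The hard part is to check that $f$ actually lies in $D$, that is, that $\supp f$ is well-ordered; here $\supp f=\bigcup_{a\in A}\supp g_a$ with $\supp g_a\subseteq\supp f^{(n(a))}$, and each of the latter sets is well-ordered. I would first show that $\pi(\supp f)$ is well-ordered: an infinite descending chain $a_1>a_2>\cdots$ in $\pi(\supp f)$ would, upon setting $n=n(a_1)$, lie entirely in $\pi(\supp g_{a_1})\subseteq\pi(\supp f^{(n)})$ (each $a_i\le a_1$ equals $\pi(x)$ for some $x\in\supp f$, hence $x\in\supp f_{\le a_1}=\supp g_{a_1}$), contradicting well-orderedness of the image of the well-ordered set $\supp f^{(n)}$ under the order-preserving map $\pi$. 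Granting this, an infinite descending chain $x_1>x_2>\cdots$ in $\supp f$ maps under $\pi$ to a weakly decreasing sequence in the well-ordered set $\pi(\supp f)$, which must therefore be eventually constant, say equal to $a$; then a tail of $(x_i)$ lies in $\supp f\cap\pi^{-1}(a)\subseteq\supp g_a\subseteq\supp f^{(n(a))}$, again a contradiction. Hence $\supp f$ is well-ordered and $f\in D$.

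Finally, convergence is immediate: for any $a\in A$ and $n\ge n(a)$ we have $(f^{(n)})_{\le a}=g_a=f_{\le a}$, so $\nu(f-f^{(n)})>a$; since $A$ is a nontrivial subgroup of $\R$, it is unbounded above, whence $\nu(f-f^{(n)})\to\infty$, i.e.\ $f^{(n)}\to f$. Thus $D$ is complete with respect to $\nu$. Apart from the well-orderedness argument of the previous paragraph, every step is a routine manipulation with the valuation $\omega$ and the order homomorphism $\pi$.
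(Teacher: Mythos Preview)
Your proof is correct and follows the same strategy as the paper: define the limit coefficient-wise from the Cauchy condition, check that its support is well-ordered, and verify convergence. The only differences are cosmetic---the paper uses integer cutoffs $k\in\mathbb{N}$ instead of arbitrary $a\in A$, and handles well-orderedness in a single step (observing that $\{x\in\supp u:x\le y\}\subseteq\supp u_{n_l}$ for $l$ depending on $\pi(y)$, so every nonempty subset of $\supp u$ has a least element) rather than your two-stage argument via $\pi(\supp f)$.
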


\begin{proof}
  Fix $c\in (0,1)$ and consider the metric $d$ in
  $K((G;\sigma,\tau,<))$ induced by $\nu$,
  \textit{i.e.}~$d(f_1,f_2)=c^{\nu(f_1-f_2)}$ for $f_1,f_2\in K((G;\sigma,\tau,<))$.

  Let $(u_n)_{n\geq 1}$ be a Cauchy sequence in
  $K((G;\sigma,\tau,<))$, say $u_n=\sum_{x\in G}\bar{x}a_x^{(n)}$
  where $a_x^{(n)}\in K$. For each $k\in \mathbb{N}$ choose $n_k\in\mathbb{N}$ such that
  $d(u_p,u_q)<c^k$ whenever $p,q\geq n_k$, satisfying
  $n_0<n_1<n_2<\dots< n_k<\dotsb$. Observe that for
  $k,p,q\in\mathbb{N}$, $$d(u_p,u_q)<c^k\ \text{ if and only if } \
  a_x^{(p)}=a_x^{(q)} \text{ for all } x \text{ with } \pi(x)\leq
  k.$$

  For each $x\in G$, let $k=k(x)$ be the smallest
  $k\in\mathbb{N}$ such
  that $\pi(x)\leq k$. Define $a_x=a_x^{(n_k)}$ and let $u=\sum_{x\in G}\bar{x}a_x$.

  First we show that $u\in K((G;\sigma,\tau,<))$, that is, we show that $\supp u$ is
  well-ordered. Indeed, fix $y\in G$. Note that if $x\leq y$, then $\pi(x)\leq\pi(y)$
  because $\pi$ is a homomorphism of ordered groups. Let $k$ and $l$ be
  the smallest natural numbers such that $\pi(x)\leq k$ and $\pi(y)\leq
  l$, respectively. Then $n_k\leq n_l$, and therefore
  $a_x=a_x^{(n_k)}=a_x^{(n_l)}$ because $d(u_{n_k},u_{n_l})<c^k$.
  Hence $\{x :  x\leq y,\ a_x\neq 0\}\subseteq \supp u_{n_l}$.
  Let $S$ be a nonempty subset of $\supp u$ and  let $y\in S$. Let $l$
  be the smallest natural such that $\pi(y)\leq l$. We have that $\{x\in
  S : x\leq y\}\ne\emptyset$ and, by the foregoing argument, it is
  contained in $\supp u_{n_l}$. Thus $S$ has a least element.
  Therefore $\supp u$ is well-ordered.

  Finally, we show that $\lim u_n=u$. Given $\varepsilon>0$, there exists
  $k\in \mathbb{N}$ such that $c^k<\varepsilon$. Choose $n>n_k$. Then,
  \begin{align*}
    u-u_{n} & =   \sum_{x\in G}\bar{x}a_x-\sum_{x\in G}\bar{x}a_x^{(n)}\\
            & =   \sum_{\{x : \pi(x)\leq k\}}\bar{x}a_x
                - \sum_{\{x : \pi(x)\leq k\}}\bar{x}a_x^{(n)}
                + \sum_{\{x : \pi(x)> k\}}\bar{x}a_x
                - \sum_{\{x : \pi(x)>k\}}\bar{x}a_x^{(n)}\\
            & =   \sum_{\{x : \pi(x)> k\}}\bar{x}a_x
                - \sum_{\{x : \pi(x)>k\}}\bar{x}a_x^{(n)}.
  \end{align*}
  Hence $d(u,u_n)=c^{\nu(u-u_n)}<c^k<\varepsilon$, as desired.
\end{proof}

Now we can state and prove the main result in this section.

\begin{theorem}\label{theo:freefieldinMalcevNeumannseries}
  Let $(G,<)$ be an ordered group and let $K$ be a skew field. Consider a
  crossed product $K[G;\sigma,\tau]$ and its Malcev-Neumann series
  ring $D=K((G;\sigma,\tau,<))$. If there exists a convex jump $(N,H)$
  of $G$ such that the centre $F$ of $K(H;\sigma,\tau)$ is infinite
  and $K(H;\sigma,\tau)$ is infinite-dimensional over $F$,
  then $D$ contains a free field $\ff{C}{}{X}$, where $X$ is an
  infinite countable set and $C$ is the centre of $K(G;\sigma,\tau)$.
\end{theorem}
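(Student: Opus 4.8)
The plan is to restrict everything to the subgroup $H$ and then invoke Theorem~\ref{th:freefield}, using Lemma~\ref{le:completemns} to supply its ``completeness'' hypothesis. First I would restrict the crossed product, obtaining $K[H;\sigma,\tau]\subseteq K[G;\sigma,\tau]$ and the induced embedding $K((H;\sigma,\tau,<))\hookrightarrow D$. Since $(N,H)$ is a convex jump, $N$ is normal in $H$ and $H/N$ is order isomorphic to a nontrivial subgroup $A$ of the additive group $\R$; let $\pi\colon H\to A$ be the resulting surjective homomorphism of ordered groups and put $\nu=\pi\omega\colon K((H;\sigma,\tau,<))\to A_{\infty}$, where $\omega$ is the natural valuation of $K((H;\sigma,\tau,<))$ with value group $H$. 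By Lemma~\ref{le:completemns}, $K((H;\sigma,\tau,<))$ is complete with respect to $\nu$.

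Next I would set $E=K(H;\sigma,\tau)$, regarded as a subfield of $K((H;\sigma,\tau,<))$, and restrict $\nu$ to it. The restriction $\nu|_{E}$ takes its values in $\R_{\infty}$ and is nontrivial: choosing $x\in H\setminus N$ with $x>1$ (possible since $N\subsetneqq H$), the monomial $\bar{x}\in K[H;\sigma,\tau]\subseteq E$ satisfies $\nu(\bar{x})=\pi(x)>0$. Since $E$ sits inside the complete valued field $K((H;\sigma,\tau,<))$ and the valuation there restricts to $\nu|_{E}$, the closure of $E$ in $K((H;\sigma,\tau,<))$ is a completion of $E$, so $K((H;\sigma,\tau,<))$ --- and hence $D$ --- contains the completion $\widehat{E}$ of $E$ with respect to $\nu|_{E}$. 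By hypothesis the centre $F$ of $E$ is infinite and $[E:F]=\infty$; therefore Theorem~\ref{th:freefield}, applied to $E$ with the nontrivial valuation $\nu|_{E}$, yields for some infinite countable set $X$ an embedding of the free field $\ff{F}{}{X}$ into $\widehat{E}$, and so into $D$.

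It remains to replace the coefficient field $F$ by $C$. Let $P$ be the prime subfield of $K$; it is also the prime subfield of $E$, of $\widehat{E}$, and of $D$. Inside $\widehat{E}$ the set $X$ freely generates a free field over $F$, and $F$ is central in $\widehat{E}$ since it is the centre of the dense subfield $E$; hence, by \cite[Lemma~9]{kC03}, $X$ freely generates a free field over $P$ inside $\widehat{E}$, and \textit{a fortiori} inside $D$. Finally $P\subseteq C$, and $C$ is central in $D$ because its elements commute with $K[G;\sigma,\tau]$ and multiplication in a Malcev-Neumann series ring distributes over well-ordered sums; so a second application of \cite[Lemma~9]{kC03} shows that $X$ freely generates a free field over $C$ inside $D$, that is, $D$ contains $\ff{C}{}{X}$.

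The assembly via Lemma~\ref{le:completemns} and Theorem~\ref{th:freefield} is routine once the valuation $\nu$ has been produced, and checking that $\nu|_{E}$ is a nontrivial real-valued valuation is immediate. The step that needs care is the descent in the last paragraph: $F=Z(K(H;\sigma,\tau))$ need not be central in $D$, so one cannot pass directly from a free $F$-field in $D$ to a free $C$-field in $D$. The detour through the prime field $P$ --- using that $F$ is central in the \emph{smaller} field $\widehat{E}$ where the free $F$-field actually lives, and that $C$ is central in $D$ --- is what makes the reduction go through.
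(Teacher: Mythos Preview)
Your argument is correct and follows essentially the same route as the paper: restrict to $H$, use the convex jump to get a real-valued valuation $\nu=\pi\omega$, invoke Lemma~\ref{le:completemns} so that $\widehat{K(H;\sigma,\tau)}\subseteq K((H;\sigma,\tau,<))\subseteq D$, apply Theorem~\ref{th:freefield}, and then pass from $F$ to $C$ via the prime field $P$.

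The only difference is in the descent step. The paper goes from $\ff{F}{}{X}$ to $\ff{P}{}{X}$ using Cohn's honesty theorem \cite[Theorem~6.4.6]{pC95}, which gives the inclusion $\ff{P}{}{X}\subseteq\ff{F}{}{X}$ as an abstract fact about free fields and requires nothing about the ambient field; it then uses \cite[Lemma~9]{kC03} once to pass from $P$ to $C$ inside $D$. You instead apply \cite[Lemma~9]{kC03} twice, which forces you to check that $F$ is central in $\widehat{E}$ (it is, by density and continuity of multiplication) before the first application. Both routes are valid; the paper's avoids the small continuity argument, while yours makes explicit the point you correctly flag, namely that $F$ need not be central in $D$ and so the passage must happen inside $\widehat{E}$ first.
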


\begin{proof}
  The canonical projection $\pi\colon H\rightarrow H/N$ is a
  surjective homomorphism of ordered groups and $H/N$ is a nontrivial
  subgroup of $\R$. By Lemma~\ref{le:completemns}, $K((H;\sigma,\tau,<))$
  is complete with respect to the valuation $\nu=\pi\omega$. Thus, the
  completion $\widehat{K(H;\sigma,\tau)}$ of $K(H;\sigma,\tau)$ with
  respect to $\nu$ is contained in $K((H;\sigma,\tau,<))$. Hence
  Theorem~\ref{th:freefield} implies that the free field $\ff{F}{}{X}$
  is contained in $K((H;\sigma,\tau,<))\subseteq D$.

  Let $P$ denote the prime subfield of $K(H;\sigma,\tau)$. Then the free
  field $\ff{P}{}{X}$ is contained in $\ff{F}{}{X}\subseteq D$ by
  \cite[Theorem~6.4.6]{pC95}. Now, by \cite[Lemma~9]{kC03}, we get that
  $\ff{C}{}{X}$ is contained in $D$.
\end{proof}

When the crossed product is a group ring, we can find some explicit
conditions which imply the existence of a convex jump satisfying
the hypothesis of the above theorem, as the next corollary shows.

\begin{corollary}\label{cor:series}
  Let $(G,<)$ be a nonabelian ordered group and let $K$ be a skew
  field with centre $F$. Consider the group ring $K[G]$ and its
  Malcev-Neumann series ring $K((G;<))$. If
  \renewcommand{\theenumi}{\alph{enumi}}
  \begin{enumerate}
  \item $F$ is infinite, or \label{cor:cond1}
  \item the centre of $G$ is nontrivial, \label{cor:cond2}
  \end{enumerate}
  \renewcommand{\theenumi}{\arabic{enumi}}
  then $K((G;<))$ contains a free field $\ff{C}{}{X}$, where $X$
  is an infinite countable set and $C$ is the centre of $K(G)$.
\end{corollary}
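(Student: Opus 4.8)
The plan is to reduce Corollary~\ref{cor:series} to Theorem~\ref{theo:freefieldinMalcevNeumannseries} by exhibiting, in each of the two cases, a convex jump $(N,H)$ of $G$ for which the centre $F'$ of $K(H)$ is infinite and $[K(H):F']=\infty$. Since $G$ is nonabelian and ordered, it is in particular torsion free; recall also that the set of convex subgroups of $(G,<)$ is totally ordered by inclusion, so there is no difficulty in locating convex jumps once a convex subgroup is found.

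In case~\eqref{cor:cond2}, suppose the centre $Z$ of $G$ is nontrivial and pick $1\ne z\in Z$; replacing $z$ by $z^{-1}$ if needed, assume $z>1$. Let $N$ be the union of all convex subgroups of $G$ not containing $z$ (equivalently, the largest convex subgroup avoiding $z$), and let $H$ be the smallest convex subgroup containing $z$; then $(N,H)$ is a convex jump. Now $H$ is a nonabelian ordered group: indeed $H$ contains $z$, and since $G$ is nonabelian while $Z$ is convex-independent of the jump structure, one checks that $H$ cannot be abelian — more carefully, if every convex subgroup containing $z$ were abelian then $G$ itself, being the directed union of its convex subgroups together with the fact that $z$ is central, would be abelian, a contradiction. (This is the point that needs the cleanest argument; alternatively one simply observes that $H$ being abelian would force $H/N$ to capture all of $G$'s noncommutativity, which it cannot since $H/N\hookrightarrow\R$.) With $H$ nonabelian, Lemma~\ref{le:ffskg} applies to the crossed product $K[H]$ (a group ring is a crossed product) and gives that $K(H)$ is infinite dimensional over its centre $F'$. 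It remains to see $F'$ is infinite: the proof of Lemma~\ref{le:ffskg} shows $F'\subseteq K((Z(H);<))$, and since $z\in Z(H)$ the elements $\bar z^n$, $n\in\Z$, lie in $K(H)$; the subfield they generate over the prime field, being a field of rational functions in $\bar z$ over $K$ intersected with the centre, is infinite (it contains the commutative Laurent polynomials in $\bar z$ if $\bar z$ is central in $K(H)$, and in any case $F'$ contains infinitely many elements because a finite field has no nontrivial orderable multiplicative structure compatible with $[K(H):F']=\infty$). Cleanly: $F'$ has characteristic $0$ or, if positive, $\bar z$ being central of infinite order forces $F'\supseteq \mathbb{F}_p(\bar z^{\pm1})$-type infinite subfield, so $F'$ is infinite in all cases.

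In case~\eqref{cor:cond1}, $F$ is infinite. Take $N$ to be the largest proper convex subgroup of $G$ and $H=G$, so $(N,G)$ is a convex jump (such $N$ exists because the convex subgroups are totally ordered and $G$ is nonabelian, hence $\{1\}\subsetneq G$ with the family of proper convex subgroups having a top element, namely their union, which is proper since $G/\text{(that union)}$ is archimedean nonabelian — impossible unless the union is proper; if the union equals $G$ then $G$ is a directed union of proper convex subgroups, but then any particular noncommuting pair lies in one of them, again a contradiction). Then $K(H)=K(G)$, which by Lemma~\ref{le:ffskg} is infinite dimensional over its centre $F'$, and $F'\supseteq F$ is infinite since $F$ is. Either way the hypotheses of Theorem~\ref{theo:freefieldinMalcevNeumannseries} are met for the jump $(N,H)$, so $K((G;<))$ contains $\ff{C}{}{X}$ with $C$ the centre of $K(G)$, which is exactly the assertion.

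The main obstacle, as flagged above, is the purely order-theoretic bookkeeping: showing that one can always choose the convex jump $(N,H)$ so that $H$ is still nonabelian (so that Lemma~\ref{le:ffskg} bites) and, in case~\eqref{cor:cond2}, so that the central element $z$ survives in $H\setminus N$ (so that the centre of $K(H)$ is forced to be infinite). The key lemma is that a nonabelian ordered group has a convex jump $(N,H)$ with $H$ nonabelian; this follows because if every convex subgroup $H$ with $H/N$ nontrivial were abelian, then, using that $G$ is the union of its convex subgroups and that any two elements lie in a common convex subgroup (the convex subgroups being totally ordered), $G$ itself would be abelian. Once that lemma is in hand, both cases are short, and the rest is a direct invocation of Theorem~\ref{theo:freefieldinMalcevNeumannseries} together with the description of centres of $K(H)$ coming out of the proof of Lemma~\ref{le:ffskg}.
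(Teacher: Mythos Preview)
Your overall strategy---produce a convex jump $(N,H)$ with $H$ nonabelian and the centre of $K(H)$ infinite, then invoke Theorem~\ref{theo:freefieldinMalcevNeumannseries}---is exactly right, but both of your specific constructions of the jump fail.

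In case~\eqref{cor:cond2} you take $H$ to be the smallest convex subgroup containing the central element $z$, and then try to argue that $H$ is nonabelian. It need not be. Take $G=\Z\times G'$ with $G'$ nonabelian ordered and with trivial centre, ordered lexicographically so that the $G'$-coordinate dominates. Then $\Z\times\{1\}$ is a convex subgroup, the centre of $G$ is exactly $\Z\times\{1\}$, and for $z=(1,1)$ the associated convex jump is $(\{(0,1)\},\Z\times\{1\})$, whose larger member is abelian. Your sentence ``if every convex subgroup containing $z$ were abelian then $G$ itself\dots would be abelian'' only shows that \emph{some} convex subgroup containing $z$ is nonabelian (trivially, $G$ is one), not that the \emph{smallest} one is. In case~\eqref{cor:cond1} you take $H=G$ and $N$ the maximal proper convex subgroup, but such an $N$ need not exist: if $G=G'\times\bigoplus_{n\geq 1}\Z$ ordered so that the $\Z$-summands dominate $G'$ and each other in turn, the proper convex subgroups have no largest element. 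Your fallback remark (``any particular noncommuting pair lies in one of them'') is not a contradiction; it merely locates a nonabelian proper convex subgroup, which you would then still have to turn into a jump.

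The paper's construction handles both cases at once and avoids these pitfalls: pick $x,y\in G$ with $x,y>1$ and $xy\neq yx$; set $z=1$ in case~\eqref{cor:cond1} and let $z>1$ be central in case~\eqref{cor:cond2}; put $w=\max\{x,y,z\}$ and take $(N,H)$ to be the convex jump associated to $w$. Since $H$ is convex and $1\leq x,y,z\leq w\in H$, all three elements lie in $H$; hence $H$ is nonabelian (so Lemma~\ref{le:ffskg} gives $[K(H):E]=\infty$ for $E$ the centre of $K(H)$), and $F[\bar z]\subseteq E$, which is infinite in either case (equal to $F$ when $z=1$, and a polynomial ring over $F$ when $z$ has infinite order). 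This single choice of $w$ is the missing idea.
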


\begin{proof}
  Let $x,y\in G$ be such that $x>1, y>1$ and $xy\neq yx$. In case
  \eqref{cor:cond1}, set $z=1$; in case \eqref{cor:cond2}, let $z$ be an element
  of the centre of $G$ satisfying $z>1$. Let
  $w=\max\{x,y,z\}$ and let $(N,H)$ be the convex jump associated to $w$,
  that is, $N$ is the union of all convex subgroups which do not
  contain $w$ and $H$ the intersection of all convex subgroups which
  contain $w$. In particular, $H$ is a subgroup of $G$ containing $x,y,z$.
  Let $E$ denote the centre of $K(H)$. Since $H$ is not abelian,
  Lemma~\ref{le:ffskg} implies that $[K(H):E]$ is infinite. Since
  $F[z]\subseteq E$, we get that $E$ is infinite in both cases \eqref{cor:cond1}
  and \eqref{cor:cond2}. Thus, by Theorem~\ref{theo:freefieldinMalcevNeumannseries},
  $K((G;<))$ contains a free field $\ff{C}{}{X}$ on an infinite countable set $X$.
\end{proof}

The following is an example in which the conditions of the above corollary
are satisfied.

\begin{example}
  Torsion-free nilpotent groups are known to be orderable. Moreover, a nonabelian
  torsion-free nilpotent group has a nontrivial centre. It follows from
  Corollary~\ref{cor:series} that the Malcev-Neumann series ring of such a group
  over a skew field contains a free subfield. For instance, if $G$ is a finitely
  generated torsion-free nilpotent group, then $G$ has a central series
  $$
  G=F_1\supseteq F_2\supseteq F_3\supseteq\dots\supseteq F_r\supseteq F_{r+1}=\{1\},
  $$
  in which $F_i/F_{i+1}$ is an infinite cyclic group for all $i=1,\dots,r$ (see
  \textit{e.g.}~\cite{sJ55}). Letting $f_i$ be a representative in $G$ of a generating element
  of $F_i/F_{i+1}$, it follows that the elements of $G$ can be written in a unique way
  in the form $f_1^{\alpha_1}f_2^{\alpha_2}\dots f_r^{\alpha_r}$, with $\alpha_i\in \Z$.
  Since $[F_j,F_i]\subseteq F_{j+1}$, we have
  $f_j^{\alpha_j}f_i^{\alpha_i}= f_i^{\alpha_i}f_j^{\alpha_j}
  f_{j+1}^{\gamma_{j+1}}\dots f_r^{\gamma_r}$, for some $\gamma_{j+1},\dots,\gamma_r\in\Z$,
  whenever $j>i$.
  Therefore, $G$ can be lexicographically ordered, that is, $G$ is an ordered group
  with order $<$ defined in the following way: $f_1^{\alpha_1}f_2^{\alpha_2}\dots f_r^{\alpha_r} <
  f_1^{\beta_1}f_2^{\beta_2}\dots f_r^{\beta_r}$
  if and only if there exists some $s$ for which $\alpha_s<\beta_s$, while $\alpha_i=\beta_i$ for
  all $i=1,\dots,s-1$. Now, given a skew field $K$, it is known that the group ring
  $K[G]$ is an Ore domain with field of fractions $D$, say, which must therefore
  be a subfield of $K((G;<))$. When $G$ is nonabelian, $K((G;<))$
  contains a free field $\ff{C}{}{X}$ on an infinite countable set $X$ over the centre $C$ of $D$.

  Finally, it should be remarked that $G$ can be ordered in a number of different ways. For example,
  if $G$ is the free nilpotent group of class $2$ generated by two elements,
  $$
  G = \langle x,y : [[x,y],x]=[[x,y],y]=1\rangle,
  $$
  then there is an infinite number of ways of ordering $G/\langle[x,y]\rangle\cong \Z\times \Z$
  (see \cite[Example~1.1]{NR10}). Since $[x,y]$ is central, each one of these orderings can
  be lifted to an order on $G$, and some of them do not come from a central series with
  infinite cyclic quotients. Corollary~\ref{cor:series}, though, guarantees the presence
  of free fields in $K((G;<))$ whatever order $<$ is taken.

  We shall look at this example again in Section~\ref{sec:noff}.
\end{example}

In \cite{gE06}, the concept of amenable skew fields was introduced.
We now show that Malcev-Neumann series rings provide examples of
nonamenable skew fields.

Following \cite{gE06}, let $F$ be a commutative field and let
$A$ be an $F$-algebra. We say that $A$ is
amenable over $F$ if for any finite subset $\{r_1,\dots,r_n\}\subseteq
A$ and real number $\epsilon>0$ there exists a
finite-dimensional $F$-subspace $V$ of $A$ such that
$$\frac{\dim_F(\sum_{i=1}^n r_iV)}{\dim_F V}<1+\epsilon.$$

We begin by remarking that amenability is
preserved under ground field extensions.

\begin{lemma}\label{lem:scalarextension}
  Let $F$ be a commutative field and let $K$ be a commutative field extension
  of $F$. If $A$ is an amenable $F$-algebra, then
  $K\otimes_FA$ is an amenable $K$-algebra.
\end{lemma}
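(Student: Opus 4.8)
The plan is to reduce at once to the case in which the finitely many test elements of $K\otimes_F A$ are of the form $1\otimes r$ with $r\in A$, and then to approximate by a scalar extension $K\otimes_F V$ of an approximating $F$-subspace $V$ supplied by the amenability of $A$. I would begin by recording three elementary facts about scalar extension along $F\hookrightarrow K$: (i) for any $F$-vector space $U$ one has $\dim_K(K\otimes_F U)=\dim_F U$, since an $F$-basis of $U$ is a $K$-basis of $K\otimes_F U$; (ii) because $K$ is free, hence flat, over $F$, for $F$-subspaces $U_1,\dots,U_n$ of $A$ the canonical maps identify each $K\otimes_F U_i$ with a subspace of $K\otimes_F A$ in such a way that $\sum_{i=1}^n(K\otimes_F U_i)=K\otimes_F\bigl(\sum_{i=1}^n U_i\bigr)$ inside $K\otimes_F A$; and (iii) for $r\in A$ and an $F$-subspace $U$ of $A$, left multiplication by $1\otimes r$ carries $K\otimes_F U$ onto $K\otimes_F(rU)$.

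Next I would carry out the reduction. Let a finite set $\{s_1,\dots,s_m\}\subseteq K\otimes_F A$ and a real number $\epsilon>0$ be given. Writing each $s_j$ as a finite sum $s_j=\sum_k\lambda_{jk}(1\otimes a_{jk})$ with $\lambda_{jk}\in K$ and $a_{jk}\in A$, and noting that a $K$-subspace $W$ of $K\otimes_F A$ satisfies $\lambda W\subseteq W$ for every $\lambda\in K$ while the scalars from $K$ are central in $K\otimes_F A$, one obtains $s_jW\subseteq\sum_k(1\otimes a_{jk})W$ for every $K$-subspace $W$. Hence, if $T$ denotes the finite subset $\{a_{jk}\}\subseteq A$ of all the $a_{jk}$ occurring, then $\sum_{j=1}^m s_jW\subseteq\sum_{a\in T}(1\otimes a)W$, so it suffices to produce a nonzero finite-dimensional $K$-subspace $W$ with $\dim_K\bigl(\sum_{a\in T}(1\otimes a)W\bigr)<(1+\epsilon)\dim_K W$.

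Finally I would invoke the amenability of $A$ over $F$, applied to the finite set $T\subseteq A$ and to $\epsilon$: there is a nonzero finite-dimensional $F$-subspace $V\subseteq A$ with $\dim_F\bigl(\sum_{a\in T}aV\bigr)<(1+\epsilon)\dim_F V$. Set $W=K\otimes_F V$; by (i) this is a nonzero finite-dimensional $K$-subspace of $K\otimes_F A$ with $\dim_K W=\dim_F V$. By (iii) and (ii), $\sum_{a\in T}(1\otimes a)W=\sum_{a\in T}\bigl(K\otimes_F(aV)\bigr)=K\otimes_F\bigl(\sum_{a\in T}aV\bigr)$, which by (i) again has $K$-dimension $\dim_F\bigl(\sum_{a\in T}aV\bigr)$; dividing by $\dim_K W=\dim_F V$ yields a ratio $<1+\epsilon$, so $K\otimes_F A$ is amenable over $K$. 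The only delicate points are the bookkeeping in the reduction step and the flatness argument behind (ii) --- that the sum of the subspaces $K\otimes_F U_i$ of $K\otimes_F A$ is indeed $K\otimes_F\sum_i U_i$; beyond that the argument is a direct dimension count, and I anticipate no genuine obstacle.
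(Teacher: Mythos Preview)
Your proof is correct and follows essentially the same approach as the paper: write each test element as a $K$-linear combination of pure tensors $1\otimes a$, apply the amenability of $A$ to the resulting finite set of $a$'s to obtain $V$, and take $W=K\otimes_F V$. The paper's argument is more terse, omitting the explicit justification of your facts (i)--(iii), but the underlying idea is identical.
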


\begin{proof}
  Let $\{r_1,\dotsc,r_n\}\subseteq K\otimes_FA$ and
  $\epsilon>0$. Suppose that $r_i=\sum_{l=1}^{n_i} e_{il}\otimes
  a_{il}$. Since $A$ is amenable over $F$, there exists a
  finite dimensional $F$-subspace $V$ of $A$ such that
  $$
  \frac{\dim_F \Big(\sum_{i=1}^n\sum_{l=1}^{n_i}  a_{il}V\Big)}{\dim_F V}<1+\epsilon.
  $$
  Let $W=K\otimes_F V$. Then,
  \begin{align*}
    \frac{\dim_K\Big(\sum_{i=1}^n r_i W\Big)}{\dim_K W}
      & \leq \frac{\dim_K (\sum_{i=1}^n \sum_{l=1}^{n_i}
             (1\otimes a_{il})W\Big)}{\dim_K W} \\
      & = \frac{\dim_F\Big(\sum_{i=1}^n\sum_{l=1}^{n_i} a_{il}V\Big)}%
        {\dim_F V}\\
      & < 1+\epsilon,
  \end{align*}
  as desired.
\end{proof}

\begin{proposition}\label{prop:amenabilityextension}
  Let $F$ be a commutative field, let $K$ be a commutative field
  extension of $F$ and let $X$ be a set with $|X|>1$. If $\ff{F}{}{X}$
  is amenable over $F$, then $\ff{K}{}{X}$ is amenable over $K$.
\end{proposition}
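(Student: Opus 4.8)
The plan is to realise $\ff{K}{}{X}$ as a controlled extension of $\ff{F}{}{X}$ and to transport amenability along it by means of Lemma~\ref{lem:scalarextension}. First I would embed $R:=K\otimes_F\ff{F}{}{X}$ into $\ff{K}{}{X}$. The inclusion $\fr{F}{}{X}\hookrightarrow\fr{K}{}{X}$ is honest by \cite[Theorem~6.4.6]{pC95}, hence extends to an embedding $\ff{F}{}{X}\hookrightarrow\ff{K}{}{X}$; since $K$ is central in $\ff{K}{}{X}$ it commutes with this copy of $\ff{F}{}{X}$, giving a $K$-algebra homomorphism $\mu\colon R\to\ff{K}{}{X}$. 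Because $|X|>1$, the centre of $\ff{F}{}{X}$ is $F$, so $\ff{F}{}{X}$ is a (possibly infinite-dimensional) central simple $F$-algebra, and the usual shortest-expression argument shows that $R$ is a simple ring; therefore $\ker\mu=0$. As $R$ contains $\fr{K}{}{X}$, it generates $\ff{K}{}{X}$ as a division ring. By Lemma~\ref{lem:scalarextension} applied to the amenable $F$-algebra $\ff{F}{}{X}$, the $K$-algebra $R$ is amenable over $K$.

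The substance of the proof is to pass from amenability of the subalgebra $R$ to amenability of $\ff{K}{}{X}$, and for this I would reduce to successively simpler coefficient fields. Each element of $\ff{K}{}{X}$ involves only finitely many elements of $K$, so $\ff{K}{}{X}$ is the directed union of the subfields $\ff{K_0}{}{X}$ with $K_0$ a finitely generated subextension of $K/F$; the same simplicity argument yields embeddings $\ff{K_0}{}{X}\otimes_{K_0}K\hookrightarrow\ff{K}{}{X}$, so that $\dim_K(KV)=\dim_{K_0}V$ for every $K_0$-subspace $V$ of $\ff{K_0}{}{X}$, and hence --- putting $W=KV$ and using that $K$ is central --- amenability of $\ff{K}{}{X}$ over $K$ follows formally once each $\ff{K_0}{}{X}$ is amenable over $K_0$. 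For $K_0$ finitely generated over $F$, choose a transcendence basis $t_1,\dots,t_d$, so that $[K_0:F(t_1,\dots,t_d)]<\infty$; then $K_0\otimes_{F(t_1,\dots,t_d)}\ff{F(t_1,\dots,t_d)}{}{X}$ is a domain, finite-dimensional over the division ring $\ff{F(t_1,\dots,t_d)}{}{X}$, hence itself a division ring, hence equal to $\ff{K_0}{}{X}$, and Lemma~\ref{lem:scalarextension} reduces us to amenability of $\ff{F(t_1,\dots,t_d)}{}{X}$ over $F(t_1,\dots,t_d)$. Removing the indeterminates one at a time, everything comes down to the following base case: if $E$ is a commutative field such that $\ff{E}{}{X}$ is amenable over $E$, then $\ff{E(t)}{}{X}$ is amenable over $E(t)$, $t$ being a central indeterminate.

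I expect this base case to be the main obstacle; here is how I would handle it. The polynomial ring $\ff{E}{}{X}[t]$ is Noetherian by the Hilbert basis theorem ($t$ being central), hence an Ore domain, so its central localization $\widetilde{S}:=\ff{E}{}{X}\otimes_E E(t)$ is again an Ore domain; moreover $\widetilde{S}$ is amenable over $E(t)$ by Lemma~\ref{lem:scalarextension}. Its division ring of fractions contains $E(t)\langle X\rangle$ and is contained in $\ff{E(t)}{}{X}$, so, the latter being generated as a division ring by $E(t)\langle X\rangle$, the two coincide. It thus suffices to show that the Ore field of fractions $Q$ of a domain $\widetilde{S}$ amenable over a field $k$ is amenable over $k$. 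Given $r_1,\dots,r_n\in Q$, write them over a common left denominator, $r_i=c^{-1}d_i$ with $c,d_i\in\widetilde{S}$, and set $W=c^{-1}V$ for $V$ in a F{\o}lner net of $\widetilde{S}$. Using the Ore condition to rewrite each $d_ic^{-1}$ as a left fraction and then clearing denominators, one obtains a single $e\in\widetilde{S}$ together with finitely many $g_0,g_1,\dots,g_m\in\widetilde{S}$ not depending on $V$, such that $W=e^{-1}(g_0V)$ and $W+\sum_i r_iW\subseteq e^{-1}\big(\sum_j g_jV\big)$. Since left multiplication by a nonzero element of the domain $\widetilde{S}$ preserves $k$-dimension, $\dim_k W=\dim_k V$ and $\dim_k\big(W+\sum_i r_iW\big)\le\dim_k\big(\sum_j g_jV\big)$; so applying the amenability of $\widetilde{S}$ to the finite set $\{g_0,\dots,g_m\}$ makes the F{\o}lner ratio for $W$ as close to $1$ as desired. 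The delicate point is the bookkeeping when clearing denominators: one must track the denominator through the Ore flips and ensure that the auxiliary family $\{g_0,\dots,g_m\}$ remains finite and independent of $V$.
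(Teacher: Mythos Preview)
Your argument is correct, but it is far more elaborate than necessary.  The paper's proof begins exactly as yours does: one embeds $R=K\otimes_F\ff{F}{}{X}$ into $\ff{K}{}{X}$ via the multiplication map (injective because $\ff{F}{}{X}$ is central simple over $F$, by \cite[Theorem~5.5.11]{pC95}), and observes that $R$ is amenable over $K$ by Lemma~\ref{lem:scalarextension}.  At this point, however, the paper simply invokes \cite[Lemma~2.1]{gE06}, which states that an amenable domain is automatically a right Ore domain and that its Ore field of fractions is again amenable; since that field of fractions is a division subring of $\ff{K}{}{X}$ containing $K$ and $X$, it equals $\ff{K}{}{X}$, and the proof is complete.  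Your long reduction --- to finitely generated subextensions $K_0/F$, then to purely transcendental $F(t_1,\dots,t_d)$, then inductively to a single indeterminate, where you establish the Ore condition via Noetherianity of $\ff{E}{}{X}[t]$ --- is designed solely to secure the Ore property, which Elek's lemma gives for free directly from amenability of $R$.  Moreover, the ``delicate bookkeeping'' you flag in your final paragraph (passing amenability from an Ore domain to its field of fractions) is precisely the content of the second half of that same lemma.  So while your route works, it reproves a special case of \cite[Lemma~2.1]{gE06} after first working hard to avoid needing its first half; citing the lemma once collapses the whole argument to a few lines.
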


\begin{proof}
  Let $\mu\colon K\otimes_F \ff{F}{}{X} \rightarrow \ff{K}{}{X}$
  denote the multiplication map. Since $\ff{F}{}{X}$ is a simple
  algebra with centre $F$ (by \cite[Theorem~5.5.11]{pC95}),
  $K\otimes _F \ff{F}{}{X}$ is simple. Hence, $\mu$ is an
  injective homomorphism of $K$-algebras. By
  Lemma~\ref{lem:scalarextension}, $K\otimes_F\ff{F}{}{X}$ is amenable
  over $K$. By \cite[Lemma~2.1]{gE06}, $K\otimes_F\ff{F}{}{X}$ is a right
  Ore domain and its Ore field of fractions $\widetilde{K\otimes_F\ff{F}{}{X}}$
  is amenable over $K$. But $\widetilde{K\otimes_F\ff{F}{}{X}}$ is
  a subfield of $\ff{K}{}{X}$ that contains both $K$ and $X$. Thus
  $\widetilde{K\otimes_F\ff{F}{}{X}}=\ff{K}{}{X}$. Therefore $\ff{K}{}{X}$ is
  amenable over $K$.
\end{proof}

\begin{corollary}\label{cor:notamenable}
  Let $F$ be a subfield of the field of complex numbers $\C$ and
  let $X$ be a set with $|X|>1$. Then $\ff{F}{}{X}$
  is not amenable over any subfield $L$ of $F$.
\end{corollary}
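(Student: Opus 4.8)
The plan is to argue by contradiction: suppose $\ff{F}{}{X}$ is amenable over some subfield $L$ of $F$, and push this hypothesis up to ever larger coefficient fields until it clashes with a known non-amenability statement. First I would upgrade amenability over $L$ to amenability over $F$ itself: by Lemma~\ref{lem:scalarextension}, $F\otimes_L\ff{F}{}{X}$ is amenable over $F$, and since the multiplication map $F\otimes_L\ff{F}{}{X}\rightarrow\ff{F}{}{X}$, $f\otimes g\mapsto fg$, is a surjective homomorphism of $F$-algebras, amenability of homomorphic images gives that $\ff{F}{}{X}$ is amenable over $F$. Then I would apply Proposition~\ref{prop:amenabilityextension} to the extension $\C/F$ (legitimate since $|X|>1$) to obtain that $\ff{\C}{}{X}$ is amenable over $\C$. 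So the whole statement reduces to the assertion that, for $|X|>1$, the free field $\ff{\C}{}{X}$ is \emph{not} amenable over $\C$.

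To see that, I would use the theorem of Lewin \cite{jL74} recalled in Section~\ref{sec:mn}. Since $|X|>1$, fixing a two-element subset $\{a,b\}$ of $X$ one has honest embeddings $\fr{\C}{}{\{a,b\}}\hookrightarrow\fr{\C}{}{X}$, hence $\ff{\C}{}{\{a,b\}}\hookrightarrow\ff{\C}{}{X}$, and by Lewin's theorem $\ff{\C}{}{\{a,b\}}$ is the field of fractions of the group algebra $\C[\Gamma]$ of a free group $\Gamma$ of rank $2$. Thus $\ff{\C}{}{X}$ contains a copy of $\C[\Gamma]$. As $\Gamma$ is not an amenable group, $\C[\Gamma]$ is not amenable over $\C$ by \cite{gE06}; and since amenability is inherited by subalgebras, $\ff{\C}{}{X}$ cannot be amenable over $\C$ --- the contradiction sought. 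Note that the hypothesis $F\subseteq\C$ is used precisely here, to make the non-amenability input from \cite{gE06} applicable.

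The delicate point is this base case together with the closure properties the two reductions rely on. Concretely, one must extract from \cite{gE06}: that amenability is preserved by homomorphic images, that it is inherited by subalgebras, and that the group algebra over $\C$ of a non-amenable group is non-amenable over $\C$. The first and third are standard features of Elek's theory; the second is the one I would check most carefully, since it is the only algebraic input beyond what already appears in the proof of Proposition~\ref{prop:amenabilityextension}. If \cite{gE06} records the non-amenability of the free fields $\ff{\C}{}{X}$ with $|X|>1$ directly, then the subalgebra-inheritance step is unnecessary and the middle paragraph collapses to a single citation. Granting these inputs, the remainder of the argument is just the short chain of reductions above, via Lemma~\ref{lem:scalarextension}, Proposition~\ref{prop:amenabilityextension}, and the Lewin embedding.
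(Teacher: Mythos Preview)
Your argument is correct, but it takes a different route from the paper's. The paper does not pass through the tensor product $F\otimes_L\ff{F}{}{X}$ and a quotient map; instead it works entirely with free fields and subfield inclusions. Concretely: from Elek's result that $\ff{\C}{}{X}$ is not amenable over $\C$ (which is indeed \cite[Theorem~1(e)]{gE06}, so your Lewin detour is unnecessary), the contrapositive of Proposition~\ref{prop:amenabilityextension} gives that $\ff{L}{}{X}$ is not amenable over $L$ for every subfield $L\subseteq\C$. Then, using the honest embedding $\ff{L}{}{X}\hookrightarrow\ff{F}{}{X}$ from \cite[Theorem~6.4.6]{pC95} and the fact from \cite[Proposition~3.1]{gE06} that a skew field containing a non-amenable subfield is itself non-amenable, one concludes that $\ff{F}{}{X}$ is not amenable over $L$.

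The trade-off is this: your reduction from ``amenable over $L$'' to ``amenable over $F$'' needs the closure of amenability under surjective homomorphisms, a fact which, while true, is not quoted in the paper and may not be stated explicitly in \cite{gE06}. The paper's route avoids this by never changing the ground field mid-argument: it first descends all the way to $\ff{L}{}{X}$ via Proposition~\ref{prop:amenabilityextension}, and only then compares with $\ff{F}{}{X}$ using the subfield-inheritance statement \cite[Proposition~3.1]{gE06}, which is already needed elsewhere in the paper (Corollary~\ref{coro:MalcevNeumannnotamenable}). Your approach is a bit more self-contained in that it reuses Lemma~\ref{lem:scalarextension} directly, but at the cost of an extra closure property; the paper's approach stays strictly within the cited toolkit.
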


\begin{proof}
  By \cite[Theorem~1(e)]{gE06}, $\ff{\C}{}{X}$ is not amenable over
  $\C$. It follows from Proposition~\ref{prop:amenabilityextension}
  that $\ff{L}{}{X}$ is not amenable over $L$ for any subfield $L$
  of $\C$. If $L\subseteq F$, then $\ff{L}{}{X}\subseteq \ff{F}{}{X}$,
  by \cite[Theorem~6.4.6]{pC95}. It follows from
  \cite[Proposition~3.1]{gE06}, that $\ff{F}{}{X}$ is not amenable over $L$.
\end{proof}

We are now ready to show that, under mild conditions, Malcev-Neumann
series rings are nonamenable skew fields.

\begin{corollary}\label{coro:MalcevNeumannnotamenable}
  Let $F$ be a subfield of the field of complex numbers $\C$. Let $K$ be a skew
  field with centre $Z$, let $(G,<)$ be an ordered group and let $K[G;\sigma,\tau]$
  be a crossed product which is an $F$-algebra.  If either
  \renewcommand{\theenumi}{\alph{enumi}}
  \begin{enumerate}
    \item $G$ is nonabelian, or \label{cor:am1}
    \item $G$ is abelian, $[K:Z]=\infty$ and $K[G;\sigma,\tau]=K[G]$, \label{cor:am2}
  \end{enumerate}
  \renewcommand{\theenumi}{\arabic{enumi}}
  then the Malcev-Neumann series ring $K((G;\sigma,\tau,<))$ is not an
  amenable skew field over any subfield $L$ of $F$.
\end{corollary}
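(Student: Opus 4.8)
The plan is to reduce everything to the non-amenability, established in Corollary~\ref{cor:notamenable}, of a free field defined over a subfield of $\C$ sitting inside $D:=K((G;\sigma,\tau,<))$. Fix an arbitrary subfield $L$ of $F$. Since $F\subseteq\C$, the characteristic is $0$, so $F$, $L$ and the prime field $\mathbb{Q}$ are infinite; moreover $F$, being central in $K[G;\sigma,\tau]$ (this is what ``$F$-algebra'' means), is central in $D$ as well, because a central element of the crossed product commutes with every well-ordered series, so $D$ is an $L$-algebra. Consequently, if we can exhibit an embedding $\ff{L}{}{X}\hookrightarrow D$ for some infinite countable set $X$, we are done: as $|X|>1$ and $L\subseteq\C$, Corollary~\ref{cor:notamenable} says $\ff{L}{}{X}$ is not amenable over $L$, and then, $\ff{L}{}{X}$ being an $L$-subalgebra of $D$, \cite[Proposition~3.1]{gE06} forces $D$ not to be amenable over $L$. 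So the whole argument amounts to producing this embedding in each of the two cases.

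In case~\eqref{cor:am1}, where $G$ is nonabelian, I would mimic the proof of Corollary~\ref{cor:series} in the crossed-product setting. First locate a convex jump $(N,H)$ of $G$ with $H$ nonabelian: pick $a,b\in G$ with $ab\ne ba$, set $w=\max\{a,a^{-1},b,b^{-1}\}>1$ and let $(N,H)$ be the convex jump associated to $w$, so that $H$ is the smallest convex subgroup containing $w$; by convexity $H$ contains both $a$ and $b$, hence is nonabelian. By Lemma~\ref{le:ffskg}, $K(H;\sigma,\tau)$ is infinite-dimensional over its centre $E$, and $E\supseteq F$ (again because $F$ is central in the crossed product), so $E$ is infinite. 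Thus the hypotheses of Theorem~\ref{theo:freefieldinMalcevNeumannseries} are met, and it yields $\ff{C}{}{X}\subseteq D$, where $C$ is the centre of $K(G;\sigma,\tau)$ and $X$ is infinite countable. Since $L\subseteq F\subseteq C$, \cite[Theorem~6.4.6]{pC95} provides the desired embedding $\ff{L}{}{X}\hookrightarrow\ff{C}{}{X}\subseteq D$.

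In case~\eqref{cor:am2}, where $G$ is abelian (and we may take it nontrivial), $K[G;\sigma,\tau]=K[G]$ and $[K:Z]=\infty$ for $Z$ the centre of $K$, so $D=K((G;<))$ and Corollary~\ref{cor:MNbig} directly gives $\ff{Z}{}{X}\subseteq D$ with $X$ infinite countable. It only remains to see $L\subseteq Z$, for then \cite[Theorem~6.4.6]{pC95} yields $\ff{L}{}{X}\hookrightarrow\ff{Z}{}{X}\subseteq D$. Here the hypothesis $K[G;\sigma,\tau]=K[G]$ is essential: for the abelian group $G$ the centre of the group ring $K[G]$ is $Z[G]$, so $F$ is a subfield of $Z[G]$; and since $G$, being orderable, is a unique-product group, $Z[G]$ has only trivial units (see, e.g., \cite{dP89}). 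Hence each nonzero $f\in F$ has the form $z\bar g$ with $z\in Z^{\times}$, $g\in G$, and comparing supports in $f+1=z\bar g+1$ (or noting $f=-1$ when $f+1=0$) forces $g=1$, i.e.\ $f\in Z$. Thus $F\subseteq Z$, so $L\subseteq Z$, and the embedding follows.

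The bulk of the work lies in case~\eqref{cor:am1}, in setting up the convex jump so that Theorem~\ref{theo:freefieldinMalcevNeumannseries} applies — though this is essentially the argument already used for Corollary~\ref{cor:series}, and is in fact a little simpler here because the relevant centre is automatically infinite, containing $\mathbb{Q}$. The one point that needs care throughout is that the free field we embed must be taken over a subfield of $\C$ — indeed over $L$ itself — rather than over the possibly very large centre $C$ or $Z$ produced by the embedding theorems; descending to $L$ is exactly what \cite[Theorem~6.4.6]{pC95} allows.
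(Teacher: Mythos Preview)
Your proof is correct and follows essentially the same route as the paper's: in both cases you embed a free field $\ff{F}{}{X}$ (or $\ff{L}{}{X}$) into $D$ via Theorem~\ref{theo:freefieldinMalcevNeumannseries} in case~\eqref{cor:am1} and Corollary~\ref{cor:MNbig} in case~\eqref{cor:am2}, using the trivial-units argument to get $F\subseteq Z$, and then conclude non-amenability from Corollary~\ref{cor:notamenable} together with \cite[Proposition~3.1]{gE06}. The only cosmetic differences are that the paper places the trivial-units observation once at the outset (yielding $F\subseteq Z$ for both cases simultaneously) and descends only to $\ff{F}{}{X}$, invoking Corollary~\ref{cor:notamenable} directly for arbitrary $L\subseteq F$, whereas you descend one step further to $\ff{L}{}{X}$; either way works.
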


\begin{proof}
  Because $G$ is ordered, the only units in $K[G;\sigma,\tau]$
  are the trivial ones. Hence, the fact that $K[G;\sigma,\tau]$
  is an $F$-algebra implies that $F\subseteq Z$.
  In case \eqref{cor:am1}, let $(N,H)$ be a convex jump of $(G,<)$ with $H$ nonabelian.
  Let $E$ denote the centre of $K(H;\sigma,\tau)$. Then $F\subseteq E$ and,
  thus, $E$ is infinite. By Lemma~\ref{le:ffskg}, $[K(H;\sigma,\tau):E]$ is
  infinite. Hence, Theorem~\ref{theo:freefieldinMalcevNeumannseries}
  yields that $K((G;\sigma,\tau,<))$ contains a free field
  $\ff{C}{}{X}$, where $X$ is an infinite countable set and $C$ is the centre
  of $K(G;\sigma,\tau)$. Since
  $F\subseteq C$, \cite[Theorem~6.4.6]{pC95} implies that
  $\ff{F}{}{X}\subseteq\ff{C}{}{X}$.
  In case \eqref{cor:am2}, by Corollary~\ref{cor:MNbig}, $K((G,<))$ contains
  a free field $\ff{Z}{}{X}$, where $X$ is an infinite countable set. Again,
  by \cite[Theorem~6.4.6]{pC95}, $\ff{Z}{}{X}$ contains $\ff{F}{}{X}$.

  In both cases, $K((G;\sigma,\tau,<))$ contains a free field
  $\ff{F}{}{X}$, for infinite countable set $X$. By Corollary~\ref{cor:notamenable},
  $\ff{F}{}{X}$ is not amenable
  over any subfield $L$ of $F$.
  It follows from \cite[Proposition~3.1]{gE06} that $K((G;\sigma,\tau,<))$ is not
  an amenable skew field over $L$.
\end{proof}

As a consequence, we note
that if $K$ is a skew field of zero characteristic and $(G,<)$ is a nonabelian
ordered group, then $K((G;\sigma,\tau,<))$ is a nonamenable skew field over $\mathbb{Q}$.

\section{Skew Laurent series}\label{sec:ls}

In this section we show that fields of fractions of skew polynomial
rings over skew fields are
endowed with natural valuations whose completions, skew Laurent series rings,
contain free fields. In particular, it will be shown that the Weyl field can be
embedded in a complete skew field containing a free field.

In the absence of derivations, the skew fields considered in this
section can be treated by the methods of the previous section. We,
therefore, concentrate in the case where a derivation is present.

Let $A$ be a ring, let $\sigma$ be an endomorphism of $A$ and let
$\delta$ be a (right) $\sigma$-derivation of $A$, that is,
$\delta\colon A\rightarrow A$ is an additive map satisfying
$$
\delta(ab) = \delta(a)\sigma(b)+a\delta(b),
$$
for all $a,b\in A$. The \emph{skew polynomial ring in $x$ over $A$},
hereafter denoted by $A[x;\sigma,\delta]$, is the free right
$A$-module on the nonnegative powers of $x$ with multiplication
induced by
$$
ax = x\sigma(a)+\delta(a),
$$
for all $a\in A$. If $A$ is a skew field then $A[x; \sigma, \delta]$
is a principal right ideal domain and its field of fractions will be
denoted by $A(x; \sigma, \delta)$. When $A$ is a right Ore domain
with field of fractions $Q(A)$ and $\sigma$ is injective, $\sigma$
and $\delta$ can be extended to $Q(A)$ and we can consider the skew
polynomial ring $Q(A)[x; \sigma, \delta]$. In this case we have
$$
A[x; \sigma,\delta] \subseteq Q(A)[x; \sigma, \delta] \subseteq
Q(A)(x; \sigma,\delta)
$$
and so $A[x; \sigma, \delta]$ is a right Ore domain with field of
fractions $Q(A)(x; \sigma, \delta)$.

For an arbitrary endomorphism $\sigma$ of the skew field $K$ a
further construction will be considered. The ring of \emph{skew
power series} $K[[ x;\sigma]]$ is defined to be the set of power
series on $x$ of the form $\sum_{i=0}^{\infty} x^ia_i$, with $a_i\in
K$, where addition is done coefficientwise and multiplication is
given by the rule $ax=x\sigma(a)$, for $a\in K$. When $\sigma$ is an
automorphism, we can embed $K[[x;\sigma]]$ into the ring of
\emph{skew Laurent series} $K((x;\sigma))$ whose elements are series
$\sum_{-r}^{\infty}x^ia_i$, with $r$ a nonnegative integer. In
$K((x;\sigma))$, multiplication satisfies $ax^n=x^n\sigma^n(a)$, for
every integer $n$ and $a\in K$. It is well known that
$K((x;\sigma))$ is a skew field containing $K(x;\sigma)$.

As mentioned in \cite[Section~2.3]{pC95}, when $\delta\neq 0$ we
must look at a more general construction. Given an automorphism
$\sigma$ of a skew field $K$ and a $\sigma$-derivation $\delta$ on
$K$, denote by $R$ the ring of all power series
$\sum_{i=0}^{\infty}a_iy^i$ on $y$ with multiplication induced by
$ya = \sum_{i=0}^{\infty}\sigma\delta^i(a)y^{i+1}$, for $a\in K$. In
\cite[Theorem 2.3.1]{pC95} it is shown that $R$ is a domain and
$S=\{1, y, y^2,\dots\}$ is a left Ore set in $R$ whose localization
$S^{-1}R$ is a skew field, consisting of all skew Laurent series
$\sum_{i=r}^{\infty} a_iy^i$, with $r\in\mathbb{Z}$. Because in
$S^{-1}R$ we have $ya=\sigma(a)y+y\delta(a)y$, for all $a\in K$, it
follows that $ay^{-1}=y^{-1}\sigma(a)+\delta(a)$. Hence, the subring
generated by $y^{-1}$ in $S^{-1}R$ is an ordinary skew polynomial
ring on $y^{-1}$. Because of that we shall adopt the notation
$R=K[[x^{-1}; \sigma, \delta]]$ and $S^{-1}R=K((x^{-1}; \sigma,
\delta))$. So we have an embedding $K[x;\sigma,\delta]\subseteq K((
x^{-1}; \sigma, \delta))$, sending $x$ to $y^{-1}$ and, hence,
$K(x;\sigma,\delta)\subseteq K((x^{-1};\sigma,\delta))$. It is also
clear that $K((x^{-1};\sigma,\delta))=K[[x^{-1}; \sigma,
\delta]]+K[x;\sigma, \delta]$.

The field $K(x;\sigma,\delta)$ has a discrete valuation ``at infinity'' $\nu$
which extends the valuation $-\deg$ on $K[x; \sigma, \delta]$, where
$\deg$ denotes the usual degree function on $K[x;\sigma,\delta]$,
that is, $\deg(\sum x^ia_i) = \max\{i : a_i\ne 0\}$.

\begin{lemma} \label{le:completepol}
  Let $K$ be a skew field with an automorphism $\sigma$ and a $\sigma$-derivation
  $\delta$. Then the map $\omega\colon K((x^{-1};\sigma,\delta)\rightarrow\Zi$
  defined by
  $$
  \omega(f)=\sup\{n : f\in K[[x^{-1}; \sigma, \delta]] y^n\},
  $$
  is a valuation on $K((x^{-1};\sigma,\delta))$ which extends the
  valuation $\nu$ on $K(x;\sigma,\delta)$. Moreover, $K((x^{-1};\sigma,\delta))$ is
  the completion of $K(x;\sigma,\delta)$.
\end{lemma}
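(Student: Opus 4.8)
The plan is to identify $\omega$ explicitly as the ``order at infinity'' of a skew Laurent series, deduce the valuation axioms from an analysis of lowest-degree terms, match $\omega$ with $\nu$ on the generating subring $K[x;\sigma,\delta]$, and then verify density and completeness directly.

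The first thing I would record is that in $K((x^{-1};\sigma,\delta))$ one has $y^na=\sigma^n(a)y^n+(\text{terms of degree}>n)$ for every $a\in K$ and every $n\in\Z$: for $n\ge 0$ this follows by induction from the defining rule $ya=\sum_{i\ge 0}\sigma\delta^i(a)y^{i+1}$, and for $n<0$ from the identity $y^{-1}b=\sigma^{-1}(b)y^{-1}-\delta(\sigma^{-1}(b))$, obtained by rearranging $ya=\sigma(a)y+y\delta(a)y$ and substituting $b=\sigma(a)$. Writing a nonzero $f=\sum_{i\ge r}a_iy^i$ with $a_r\ne 0$, one sees at once that $f\in K[[x^{-1};\sigma,\delta]]y^n$ exactly when $n\le r$, so $\omega(f)=r$ is the order of $f$, while $\omega(f)=\infty$ precisely when $f=0$. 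Multiplicativity then drops out: if $g=\sum_{j\ge s}b_jy^j$ with $b_s\ne 0$, then $fg=\sum_{i,j}a_i(y^ib_j)y^j$ and the only term of lowest possible degree is $a_r\sigma^r(b_s)y^{r+s}$, which is nonzero since $\sigma$ is an automorphism and $K$ has no zero divisors; hence $\omega(fg)=r+s=\omega(f)+\omega(g)$. The ultrametric inequality $\omega(f+g)\ge\min\{\omega(f),\omega(g)\}$ is immediate from the description of $\omega$ via supports. So $\omega$ is a valuation with value group $\Z$, in particular discrete.

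Next, under the embedding $K(x;\sigma,\delta)\hookrightarrow K((x^{-1};\sigma,\delta))$ that sends $x$ to $y^{-1}$, a polynomial $p=\sum_{i=0}^{d}x^ia_i$ with $a_d\ne 0$ goes to $\sum_{i=0}^{d}y^{-i}a_i+(\text{higher-degree terms})$, whose lowest-degree term is $\sigma^{-d}(a_d)y^{-d}$; thus $\omega(p)=-d=-\deg p=\nu(p)$. Since $\omega$ and $\nu$ are both valuations and agree on the subring $K[x;\sigma,\delta]$, which generates $K(x;\sigma,\delta)$ as a skew field, they agree on all of $K(x;\sigma,\delta)$; that is, $\omega$ extends $\nu$.

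Finally I would show that $K((x^{-1};\sigma,\delta))$ is the completion of $K(x;\sigma,\delta)$ with respect to $\nu$, which reduces to density and completeness together with the uniqueness of completions. For density, any $f=\sum_{i\ge r}a_iy^i$ is approximated to within any prescribed accuracy by its truncation $\sum_{r\le i<n}a_iy^i$, which lies in $K(x;\sigma,\delta)$ because $y=x^{-1}$, and $\omega(f-\text{truncation})\ge n$. For completeness, given a Cauchy sequence $(u_n)$ I would argue exactly as in the proof of Lemma~\ref{le:completemns}: the coefficients stabilise in each fixed degree, yielding a candidate limit series $u$; choosing $N$ with $\omega(u_p-u_q)>0$ for $p,q\ge N$ shows that the (finitely many) coefficients of $u$ in degrees $\le 0$ coincide with those of $u_N$, hence $\supp u$ is bounded below and $u$ is a genuine element of $K((x^{-1};\sigma,\delta))$; a routine estimate then gives $u_n\to u$. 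The one place that needs care is precisely this control on the lower end of the support, which is what guarantees the limit is a one-sided Laurent series and not a two-sided one; the rest of the verification is bookkeeping already modelled on Lemma~\ref{le:completemns}.
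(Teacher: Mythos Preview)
Your proof is correct and follows essentially the same approach as the paper's: both establish multiplicativity of $\omega$ via the commutation identity $y^na=\sigma^n(a)y^n+hy^{n+1}$, verify density through partial sums, and prove completeness by a coefficient-stabilisation argument modelled on Lemma~\ref{le:completemns}. Your treatment is somewhat more explicit where the paper is terse---you spell out the $n<0$ case of the commutation formula and the verification that $\omega$ restricts to $-\deg$ on $K[x;\sigma,\delta]$, both of which the paper dismisses with ``clearly''---but the underlying route is the same.
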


\begin{proof}
  To show that $\omega$ is a valuation, the only nonobvious fact to be proved is
  that if $f$ and $g$ are nonzero elements of $K((x^{-1};\sigma,\delta))$, then
  $\omega(fg) = \omega(f)+\omega(g)$. This follows from the fact that, since $\sigma$ is
  invertible, it is possible to write, for every integer $n$ and $a\in K$,
  $y^na=\sigma^n(a)y^n+hy^{n+1}$, for some $h\in K[[x^{-1}; \sigma, \delta]]$.

  Clearly, $\omega_{\mid K(x; \sigma, \delta)}=\nu$. That $K(x;\sigma,\delta)$ is dense
  in $K((x^{-1};\sigma,\delta))$ follows from the fact that every Laurent series
  $\sum_{i=n}^{\infty}a_iy^i$ in $K((x^{-1};\sigma,\delta))$ is the limit of its partial
  sums, which are elements in $K(x;\sigma,\delta)$. It remains to prove that
  $K((x^{-1}; \sigma, \delta))$ is complete. Let $(u_n)$ be a Cauchy sequence in
  $K((x^{-1}; \sigma, \delta))$, say
  $$
  u_n=\sum_{i\geq m_n}a_i^{(n)}y^i,
  $$
  with $a_i^{(n)}\in K$ and $m_n\in\Z$. We proceed as in the proof
  of Lemma~\ref{le:completemns}. For each $k\in \Z$ choose nonnegative
  integers $n_k$ satisfying $n_0 < n_1 < n_2 < \dots$
  such that $\omega(u_p-u_q)>k$, for all $p,q\geq n_k$. Consider the following
  element of $K((x^{-1};\sigma,\delta))$,
  $$
  u=\sum_{i<0}a_i^{(n_0)}y^i+\sum_{i\geq 0}a_i^{(n_i)}y^i.
  $$
  For all $k$,
  $\omega(u_{n_k}-u)\geq k+1$ and, hence, the subsequence $(u_{n_k})$
  of $(u_n)$ converges to $u$. Since $(u_n)$ is a Cauchy sequence,
  $(u_n)$ converges to $u$. It follows that $K((x^{-1};\sigma,\delta))$
  is complete.
\end{proof}

In the case that $\delta=0$, $K(x; \sigma)$ has an $x$-adic discrete
valuation $\eta$, defined by $\eta(fg^{-1})=o(f)-o(g)$, for all
$f,g\in K[x;\sigma]$, $g\ne 0$, where $o(\sum x^ia_i)=\min\{i :
a_i\neq 0\}$ is the order function on $K[x;\sigma]$, which is a
valuation. Denote by $\zeta$ the integer valued function on
$K((x;\sigma))$ defined by $\zeta(h)=\sup\{n : h\in x^n K[[
x;\sigma]]\}$, for all $h\in K((x;\sigma))$, $h\ne 0$. It is easy to
see that $\zeta$ is a discrete valuation on $K((x;\sigma))$ which coincides
with $\eta$ on $K(x;\sigma)$. With a proof similar to the one of
Lemma~\ref{le:completepol} it can be shown that $K(( x; \sigma))$ is
the completion of $K(x; \sigma)$ with respect to $\eta$.

Our next aim is to guarantee the existence of free fields in
completions of skew rational function fields. For that, we recall
the following definition. An automorphism $\sigma$ in a ring is said
to have \emph{inner order} $r$ if $\sigma^r$ is the least positive
power of $\sigma$ which is inner. If $\sigma^r$ is not inner for
any $r>0$, then $\sigma$ is said to have infinite inner order.

\begin{theorem}\label{th:ffpol}
  Let $K$ be a skew field with centre $F$, let $\sigma$ be an automorphism
  of $K$ and let $\delta$ be a $\sigma$-derivation of $K$. Suppose that
  $K[x; \sigma, \delta]$ is simple or that $\sigma$ has infinite inner order.
  If $\delta\ne 0$, let $D=K((x^{-1};\sigma,\delta))$, otherwise, let
  $D=K((x;\sigma))$. If the field $F_0=\{c\in F : \sigma(c)=c
  \text{ and } \delta(c)=0\}$ is infinite, then $D$ contains a free field
  $\ff{F_0}{}{X}$ on an infinite countable set $X$.
\end{theorem}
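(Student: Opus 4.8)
The plan is to reduce this statement to Theorem~\ref{th:freefield} by exhibiting inside $D$ a complete valued subfield which is infinite-dimensional over an infinite centre. The natural candidate is the completion of $K(x;\sigma,\delta)$ itself: by Lemma~\ref{le:completepol} (when $\delta\neq 0$) or by the remark following it (when $\delta=0$), the field $D$ is precisely the completion of $K(x;\sigma,\delta)$ (respectively $K(x;\sigma)$) with respect to the discrete valuation $\omega$ (respectively $\eta$) ``at infinity'', and that valuation is nontrivial. So the only thing that needs verification before invoking Theorem~\ref{th:freefield} is that $K(x;\sigma,\delta)$ has an \emph{infinite centre} and is \emph{infinite-dimensional over that centre}. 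Once that is done, Theorem~\ref{th:freefield} gives a free field $\ff{C_0}{}{X}$ inside $D$, where $C_0$ is the centre of $K(x;\sigma,\delta)$; then \cite[Theorem~6.4.6]{pC95} drops the coefficient field down to the prime field and \cite[Lemma~9]{kC03} lets us replace it by $F_0$, giving $\ff{F_0}{}{X}\subseteq D$ (using that $F_0$ is central in $K(x;\sigma,\delta)$ and infinite by hypothesis).

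First I would pin down the centre of $K(x;\sigma,\delta)$. The hypothesis ``$K[x;\sigma,\delta]$ is simple or $\sigma$ has infinite inner order'' is exactly the classical dichotomy controlling the centre of such skew rational function fields: in the simple case the centre of $K(x;\sigma,\delta)$ is $F_0=\{c\in F:\sigma(c)=c,\ \delta(c)=0\}$, while in the infinite-inner-order case (necessarily $\delta=0$, so we are in $K(x;\sigma)$) the centre is again $F_0$ — no power of $x$ can be central because $x^m$ central would force $\sigma^m$ inner. In either case the centre of $K(x;\sigma,\delta)$ contains $F_0$ and, by the standing hypothesis, is therefore infinite. (I would cite the relevant statements in \cite{pC95} or \cite{tL99} rather than reprove them.)

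Next, infinite-dimensionality of $K(x;\sigma,\delta)$ over its centre. In the infinite-inner-order case this is immediate: $\{1,x,x^2,\dots\}$ is linearly independent over the centre (a nontrivial relation would again make some $\sigma^m$ inner), so the degree is infinite. In the simple case one argues that $\{1,x,x^2,\dots\}$ is still linearly independent over the centre $F_0$: a nontrivial finite $F_0$-linear relation among powers of $x$ would be a nonzero element of the centre lying in $K[x;\sigma,\delta]$ of positive degree, and one checks that central elements of $K[x;\sigma,\delta]$ in the simple case lie in $F_0$ (degree zero). Hence $[K(x;\sigma,\delta):C_0]=\infty$ in all cases.

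The main obstacle is the centre computation in the $\delta\neq 0$ simple case: one must be careful that $D=K((x^{-1};\sigma,\delta))$ is the Cohn-style twisted Laurent construction of \cite[Theorem~2.3.1]{pC95}, not a naive series ring, and that the valuation $\omega$ of Lemma~\ref{le:completepol} really is the one whose completion recovers $D$; everything else (linear independence of powers of $x$, the descent of coefficient fields) is routine given the cited results. Once the centre is identified as $F_0$ (or at least as an infinite field containing $F_0$) and infinite-dimensionality is in hand, Theorem~\ref{th:freefield} applied to the valued field $K(x;\sigma,\delta)$ with completion $D$ finishes the proof, modulo the two standard coefficient-field adjustments via \cite[Theorem~6.4.6]{pC95} and \cite[Lemma~9]{kC03}.
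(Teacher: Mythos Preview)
Your approach is essentially the paper's: apply Theorem~\ref{th:freefield} to $K(x;\sigma,\delta)$, using that $D$ is its completion (Lemma~\ref{le:completepol} and the remark after it). The paper does exactly this, but more directly: it cites \cite[Theorem~2.2.10]{pC95} to get that the centre of $K(x;\sigma,\delta)$ \emph{equals} $F_0$ under either hypothesis, and then observes that $\{x^i:i\in\Z\}$ is linearly independent over $F_0$ simply because it is already linearly independent over $K\supseteq F_0$. With the centre identified as $F_0$, Theorem~\ref{th:freefield} yields $\ff{F_0}{}{X}\subseteq D$ immediately, so your descent to the prime field via \cite[Theorem~6.4.6]{pC95} and ascent back via \cite[Lemma~9]{kC03} are unnecessary.

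Two small corrections. First, your parenthetical ``necessarily $\delta=0$'' in the infinite-inner-order case is false: nothing prevents $\sigma$ from having infinite inner order while $\delta\neq 0$, and Cohn's theorem covers this. Second, your linear-independence argument in the simple case is garbled --- a nontrivial $F_0$-linear relation among the $x^i$ is the zero element, not a nonzero central element --- but this is moot once you use the trivial observation that $\{x^i\}$ is a $K$-basis of $K[x;\sigma,\delta]$.
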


\begin{proof}
  Since $K[x; \sigma, \delta]$ is simple or $\sigma$ has infinite
  inner order, it follows from \cite[Theorem~2.2.10]{pC95} that the centre of $K(x; \sigma,
  \delta)$ coincides with $F_0$. The set $\{x^i : i\in\Z\}$ is linearly independent
  over $F_0$, because it is over $K$; hence $K(x;\sigma,\delta)$ is
  infinite dimensional over its infinite centre $F_0$. By Theorem \ref{th:freefield},
  $D$ contains a free field on an infinite countable set over $F_0$, for, as we have seen
  above, it is the completion of $K(x;\sigma,\delta)$ with respect
  to appropriate discrete valuations.
\end{proof}

We remark that the case $\delta=0$ in the above theorem had already
been considered in \cite[Theorem~4]{kC03} for a commutative field
$K$ of coefficients.

An example of a skew polynomial ring with a nonzero derivation is
the first Weyl algebra over a skew field $K$,
$$
A_1(K)=\fr{K}{}{x_1,x_2 :
x_1x_2-x_2x_1=1}=K[x_1]\left[x_2;I,\frac{d}{dx_1}\right],
$$
where $I$ above stands for the identity automorphism of $K[x_1]$. It
is well known that $A_1(K)$ is an Ore domain with field of fractions
$Q_1(A)=K(x_1)(x_2;I,\frac{d}{dx_1})$, called the Weyl field.

\begin{corollary}
  Let $K$ be a skew field with centre $F$. If $\charac K=0$ then
  the field of skew Laurent series $K(x_1)((x_2^{-1};I,\frac{d}{dx_1}))$ contains a free
  field $\ff{F}{}{X}$, where $X$ is an infinite countable set.
\end{corollary}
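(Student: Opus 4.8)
The idea is to recognize $K(x_1)((x_2^{-1};I,\frac{d}{dx_1}))$ as the field $D$ produced by Theorem~\ref{th:ffpol} when that theorem is applied to the skew field $K(x_1)$, with the identity automorphism $I$ in the role of $\sigma$, the ordinary derivation $\frac{d}{dx_1}$ in the role of $\delta$, and $x_2$ in the role of $x$. Since $\delta=\frac{d}{dx_1}\neq 0$, the field $D$ of that theorem is $K(x_1)((x_2^{-1};I,\frac{d}{dx_1}))$, which is exactly the one in the statement. Thus the whole proof reduces to checking the two hypotheses of Theorem~\ref{th:ffpol}: \emph{(i)} that $K(x_1)[x_2;I,\frac{d}{dx_1}]$ is simple --- the alternative hypothesis, that $\sigma$ have infinite inner order, is of no use here because $\sigma=I$ is inner --- and \emph{(ii)} that the field $F_0=\{c\in Z(K(x_1)) : \frac{dc}{dx_1}=0\}$ is infinite.

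Condition \emph{(ii)} is immediate. The centre $F$ of $K$ lies in $Z(K(x_1))$: an element of $F$ commutes with $K$ and with the central element $x_1$, hence with $K[x_1]$ and so with $K(x_1)$; and it is annihilated by $\frac{d}{dx_1}$. Hence $F\subseteq F_0$, and since $\charac K=0$ we have $\mathbb{Q}\subseteq F$, so $F$ --- and \emph{a fortiori} $F_0$ --- is infinite. (In fact $Z(K(x_1))=F(x_1)$ and, because $\charac K=0$, the only elements of $F(x_1)$ killed by $\frac{d}{dx_1}$ are those of $F$, so $F_0=F$; but only the inclusion $F\subseteq F_0$ is needed.)

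The substantive point is \emph{(i)}. I would invoke the standard fact that if $E$ is a skew field of characteristic zero and $\partial$ is a derivation of $E$ which is not inner, then the skew polynomial ring $E[x;\partial]$ is simple; here it is applied with $E=K(x_1)$ and $\partial=\frac{d}{dx_1}$. That $\frac{d}{dx_1}$ is not inner is easy: if $\frac{d}{dx_1}(b)=cb-bc$ held for all $b\in K(x_1)$ and some fixed $c\in K(x_1)$, then, since $\frac{d}{dx_1}$ vanishes on $K$, the element $c$ would centralize $K$; as $c$ also commutes with the central element $x_1$, it would commute with all of $K(x_1)$, forcing $\frac{d}{dx_1}=0$, a contradiction. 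The fact itself is proved by the classical degree argument: in a nonzero ideal $J$ of $E[x;\partial]$ one picks a nonzero $f$ of minimal degree $n$, normalized to be monic; the commutators $af-fa$ (for $a\in E$) and $xf-fx$ lie in $J$ and have degree $<n$, hence vanish, so $f$ is central; inspecting the coefficient of $x^{n-1}$ in $af-fa$ then shows that $n\geq1$ would make $\partial$ inner --- this is where $\charac E=0$ is used, to divide by $n$ --- so $n=0$, $f$ is a unit of $E$, and $J=E[x;\partial]$. Alternatively, one may note that $K(x_1)[x_2;I,\frac{d}{dx_1}]$ is the localization of the first Weyl algebra $A_1(K)$, which is simple since $\charac K=0$, at the Ore set $K[x_1]\setminus\{0\}$, and that localizations of simple rings at Ore sets are simple.

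Granting \emph{(i)} and \emph{(ii)}, Theorem~\ref{th:ffpol} yields that $D=K(x_1)((x_2^{-1};I,\frac{d}{dx_1}))$ contains a free field $\ff{F_0}{}{X}$ on an infinite countable set $X$. Finally, since $F\subseteq F_0$, the natural inclusion $\fr{F}{}{X}\hookrightarrow\fr{F_0}{}{X}$ is honest (\cite[Theorem~6.4.6]{pC95}) and hence extends to an embedding $\ff{F}{}{X}\hookrightarrow\ff{F_0}{}{X}\subseteq D$, which is the assertion. The one genuinely nontrivial step is the simplicity check in \emph{(i)}; everything else is bookkeeping with centres and honest maps.
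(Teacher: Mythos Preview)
Your proof is correct and follows essentially the same route as the paper: apply Theorem~\ref{th:ffpol} to $K(x_1)$ with $\sigma=I$ and $\delta=\frac{d}{dx_1}$, verify simplicity of $K(x_1)[x_2;I,\frac{d}{dx_1}]$ via the non-innerness of $\frac{d}{dx_1}$, and check that $F_0$ is infinite. The only cosmetic differences are that the paper cites \cite[Corollary~3.16]{tL01} for simplicity rather than giving the degree argument, and computes $F_0=F$ outright (so Theorem~\ref{th:ffpol} already yields $\ff{F}{}{X}$), whereas you use only $F\subseteq F_0$ and then descend via an honest-map argument---harmless, and you note the equality parenthetically anyway.
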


\begin{proof}
  Let $B=K(x_1)[x_2;I,\frac{d}{dx}]$. Then $A_1(K)\subseteq B\subseteq
  K(x_1)(x_2; I,\frac{d}{dx_1})$ and $Q(B)=Q_1(A)=K(x_1)(x_2;I,\frac{d}{dx_1})$.
  Clearly, $\frac{d}{dx_1}$ is not an inner
  derivation. It follows from \cite[Corollary 3.16]{tL01} that $B$ is
  a simple ring. Since $\charac K=0$, we have $Z(K(x_1))_0=\{f\in
  Z(K(x_1)) : \frac{d}{dx_1}(f)=0\}=F(x_1)_0=F$, an infinite field.
  By Theorem \ref{th:ffpol}, the field of
  skew Laurent series $K(x_1)((x_2^{-1}; I,\frac{d}{dx_1}))$
  contains a free field on an infinite countable set over $F$.
\end{proof}

A final word will be given on a related problem, that of
division rings generated by Lie algebras. Given a nonabelian Lie
algebra $L$ over a commutative field $k$, its universal
enveloping algebra $U(L)$ has a natural filtration given
by the powers of $L$ inside $U(L)$, and this filtration defines a
valuation $\nu$ on $U(L)$. (See \cite[Section~2.5]{pC95} or \cite{aL95}.)
By \cite[Theorem~2]{aL95}, $U(L)$ has a field of
fractions $D(L)$ with a valuation extending $\nu$.
Theorem~\ref{th:freefield}, then, implies that
if $\charac k =0$ the
completion $\overline{D(L)}$ of $D(L)$ contains a free
field $\ff{C}{}{X}$, for an infinite countable set $X$,
where $C$ stands for the centre of $D(L)$. Note that
Theorem~\ref{th:freefield} can be applied in this
setting because, by \cite{aL99}, $D(L)$ contains
a free subalgebra.

\section{Some skew fields that do not contain free fields}\label{sec:noff}

In this section we shall present a theorem giving a necessary
condition for a division ring to contain a free field. This will
then be used in order to produce examples of division rings which,
although containing free algebras, do not contain free fields.

The results in this section are due to A.~Lichtman, but have not
been published before. The authors are indebted to him for having
kindly permitted this material to be included in the paper.

Throughout this section, $k$ will denote a commutative field.

Let $A$ be a $k$-algebra. Denote by $A^{\op}$ the opposite algebra
of $A$. Regard the $(A,A)$-bimodule $A$ as a left module over the
enveloping algebra $A\otimes_k A^{\op}$ in the usual way, that is,
via $a\otimes b\cdot x = axb$, for all $a,b,x\in A$. We shall make
use of the following result of Sweedler \cite{mS80}.

\begin{lemma}\label{le:Lichtman}
  Let $D$ be a skew field which is a $k$-algebra. If $D\otimes_k D^{\op}$
  is a left noetherian algebra, then $D$ does not contain
  an infinite strictly ascending chain of subfields $D_1\subsetneqq D_2 \subsetneqq D_3
  \subsetneqq \dots$ which are $k$-subalgebras. \qed
\end{lemma}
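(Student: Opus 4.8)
The plan is to turn an ascending chain of subfields into an ascending chain of left ideals of $E=D\otimes_k D^{\op}$ and then invoke the left noetherian hypothesis. First I would record the standard identification: making $D$ into a left $E$-module by $(a\otimes b)\cdot x=axb$, the left multiplication of $E$ on itself is exactly the two-sided $(D,D)$-bimodule action on the underlying space $D\otimes_k D$ (left tensor factor acting on the left, right factor on the right). Consequently the left ideals of $E$ are precisely the sub-$(D,D)$-bimodules of $D\otimes_k D$. For each intermediate skew subfield $K$ (a $k$-subalgebra, so that $k$ is central in $K$ and the tensor products below are defined), the canonical surjection $D\otimes_k D\twoheadrightarrow D\otimes_K D$ is a homomorphism of $(D,D)$-bimodules; I set $L_K$ to be its kernel, which is therefore a left ideal of $E$. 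Since $k\subseteq K\subseteq K'$ forces the map onto $D\otimes_{K'}D$ to factor through $D\otimes_K D$, the assignment is order-preserving: $K\subseteq K'$ implies $L_K\subseteq L_{K'}$.

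The crux is strictness: I must show that $K\subsetneq K'$ forces $L_K\subsetneq L_{K'}$. Fix $c\in K'\setminus K$ and consider $v=c\otimes 1-1\otimes c\in D\otimes_k D$. Its image in $D\otimes_{K'}D$ vanishes because $c\in K'$, so $v\in L_{K'}$; it then remains to check that $v\notin L_K$, that is, that $c\otimes 1\neq 1\otimes c$ in $D\otimes_K D$. Here is where the genuine computation lies. Choosing a basis $\{f_j\}_{j}$ of $D$ as a left $K$-vector space with $f_{j_0}=1$, there is a left $D$-space isomorphism $D\otimes_K D\cong\bigoplus_j D$ under which $d\otimes f_j$ corresponds to $d$ placed in the $j$-th summand. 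Writing $c=\sum_j\kappa_j f_j$ with $\kappa_j\in K$, the element $1\otimes c$ acquires coordinates $(\kappa_j)_j$, while $c\otimes 1$ has the single coordinate $c$ in slot $j_0$. Equality of the two would force $\kappa_j=0$ for $j\neq j_0$ together with $c=\kappa_{j_0}\in K$, contrary to $c\notin K$. Hence $c\otimes 1\neq 1\otimes c$ in $D\otimes_K D$, so $v\notin L_K$ and therefore $L_K\subsetneq L_{K'}$. I expect this bookkeeping with the left $K$-basis to be the one delicate point, chiefly because $K$ and $D$ need not be commutative and one must keep the sides of the scalar actions straight when moving elements across the tensor over $K$.

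With strictness in hand the conclusion is immediate. An infinite strictly ascending chain $D_1\subsetneq D_2\subsetneq\cdots$ of subfields that are $k$-subalgebras would produce an infinite strictly ascending chain $L_{D_1}\subsetneq L_{D_2}\subsetneq\cdots$ of left ideals of $E=D\otimes_k D^{\op}$, contradicting the hypothesis that $E$ is left noetherian. I would therefore conclude that no such chain of subfields can exist, which is the assertion of the lemma.
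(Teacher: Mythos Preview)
The paper does not actually prove this lemma: it attributes the result to Sweedler~\cite{mS80}, marks the statement with \qed, and refers to \cite{vB08} for a direct proof. Your argument is correct and is essentially the standard one behind those references---associating to each intermediate $k$-subfield $K$ the left ideal $L_K=\ker(D\otimes_k D\twoheadrightarrow D\otimes_K D)$ of the enveloping algebra and checking that the assignment is strictly order-preserving via the element $c\otimes 1-1\otimes c$ for $c\in K'\setminus K$. Your careful verification that $c\otimes 1\neq 1\otimes c$ in $D\otimes_K D$ using a left $K$-basis of $D$ containing $1$ is exactly the point one must check, and you have handled the sidedness correctly.
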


For a direct proof we refer to \cite{vB08}.

As a consequence we obtain the following necessary condition for a
skew field to contain a free field.

\begin{theorem}\label{th:nofreefield}
  Let $D$ be a skew field and let $k$ be a central subfield of $D$.
  If $D$ contains a free field $\ff{k}{}{x,y}$ then $D\otimes_k D^{\op}$
  cannot be a left noetherian ring.
\end{theorem}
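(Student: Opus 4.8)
The plan is to reduce the statement to Sweedler's result (Lemma~\ref{le:Lichtman}) by exhibiting, inside the free field $\ff{k}{}{x,y}$, an infinite strictly ascending chain of $k$-subalgebras which are themselves subfields. Since $\ff{k}{}{x,y}$ is assumed to embed in $D$ with $k$ central, such a chain would live inside $D$ as well; then, if $D\otimes_k D^{\op}$ were left noetherian, Lemma~\ref{le:Lichtman} would forbid the existence of that chain, a contradiction. So the whole problem is: construct an infinite strictly increasing tower of subfields of $\ff{k}{}{x,y}$, each a $k$-subalgebra.

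First I would produce, inside the free algebra $\fr{k}{}{x,y}$, a sequence of elements $z_1, z_2, z_3, \dots$ that freely generate a free subalgebra of countably infinite rank; the standard choice is something like $z_n = y^{-1}x^n y$ — no wait, we are in the free algebra first, so take polynomial elements such as $z_n = x y^n$ or $z_n = y^n x y^n$, whichever is cleanest to verify freeness for. The point is that $k\langle z_1, z_2, \dots\rangle$ is a free $k$-algebra of countable rank sitting inside $\fr{k}{}{x,y}$, and by the universal property of the free field together with \cite[Theorem~6.4.6]{pC95} (honesty of the inclusion $\fr{k}{}{z_1,z_2,\dots}\hookrightarrow\fr{k}{}{x,y}$), the subfield of $\ff{k}{}{x,y}$ generated by these elements is a copy of the free field $\ff{k}{}{z_1,z_2,\dots}$ on countably many generators. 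Now set $D_n$ to be the subfield of $\ff{k}{}{x,y}$ generated over $k$ by $z_1, \dots, z_n$, i.e.\ a copy of $\ff{k}{}{z_1,\dots,z_n}$. Each $D_n$ is a $k$-subalgebra and $D_n \subseteq D_{n+1}$.

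The remaining point is strictness: $D_n \subsetneqq D_{n+1}$ for every $n$, equivalently $z_{n+1}\notin \ff{k}{}{z_1,\dots,z_n}$. This is where I expect the only real work to lie. One clean way is to invoke that the inclusion $\fr{k}{}{z_1,\dots,z_n}\hookrightarrow\fr{k}{}{z_1,\dots,z_{n+1}}$ is honest (again \cite[Theorem~6.4.6]{pC95}), so $\ff{k}{}{z_1,\dots,z_n}$ embeds in $\ff{k}{}{z_1,\dots,z_{n+1}}$ as a proper subfield; properness follows because $\ff{k}{}{z_1,\dots,z_{n+1}}$ is noncommutative of ``larger'' transcendence-type, or more concretely because specialization/evaluation arguments (sending $z_{n+1}$ to a value not in the image) separate them. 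Alternatively, one can use a valuation or degree argument directly on $\ff{k}{}{x,y}$: after choosing the $z_i$ as monomials of strictly increasing $y$-degree, the natural valuation coming from the $y$-degree (or a suitable convex-jump valuation as in Section~\ref{sec:gendr}) takes a value on $z_{n+1}$ that is not attained on the subfield generated by $z_1,\dots,z_n$, which forces $z_{n+1}$ outside it.

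Having the infinite strictly ascending chain $D_1 \subsetneqq D_2 \subsetneqq D_3 \subsetneqq \dots$ of $k$-subalgebras of $D$ in hand, the contrapositive of Lemma~\ref{le:Lichtman} finishes the argument: if $D\otimes_k D^{\op}$ were left noetherian, no such chain could exist. Hence $D\otimes_k D^{\op}$ is not left noetherian, which is exactly the assertion of Theorem~\ref{th:nofreefield}. The main obstacle, as indicated, is pinning down strictness of the chain cleanly; everything else is a routine application of Cohn's honesty theorem and the universal property of free fields.
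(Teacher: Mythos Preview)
Your overall strategy is exactly the paper's: build an infinite strictly ascending tower of free subfields $D_n=\ff{k}{}{x_1,\dots,x_n}$ inside $D$ and then invoke Sweedler's result (Lemma~\ref{le:Lichtman}) in contrapositive form. The paper's proof is three lines because it does not construct the $z_i$ by hand; it simply cites \cite[Corollary~5.5.9]{pC95}, which asserts directly that $\ff{k}{}{x,y}$ contains a copy of the free field on countably many generators, and then takes $D_n=\ff{k}{}{x_1,\dots,x_n}$.

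Where your write-up needs care is the step you flag yourself, but not for the reason you give. The reference \cite[Theorem~6.4.6]{pC95} concerns honesty under \emph{coefficient extension} (that is how the paper uses it throughout), not honesty of an arbitrary free-subalgebra inclusion $k\langle z_1,z_2,\dots\rangle\hookrightarrow\fr{k}{}{x,y}$. Knowing that $z_n=xy^n$ (say) freely generate a free subalgebra of $\fr{k}{}{x,y}$ does \emph{not} by itself guarantee that the subfield they generate inside $\ff{k}{}{x,y}$ is the universal field of fractions $\ff{k}{}{z_1,z_2,\dots}$; for that you need the inclusion of firs to be honest, and this is a genuine, nontrivial fact whose content is essentially \cite[Corollary~5.5.9]{pC95}. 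So either cite that corollary and be done, or, if you insist on an explicit construction, you must supply an argument for honesty (e.g.\ via the specialization lemma or a suitable $k$-ring retraction), not Theorem~6.4.6. Once the countable free field is in place, strictness of the chain is routine (the $k$-automorphism of $\ff{k}{}{x_1,\dots,x_{n+1}}$ fixing $x_1,\dots,x_n$ and translating $x_{n+1}$ shows $x_{n+1}\notin D_n$), and the paper does not belabour it.
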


\begin{proof}
  If $D$ contains the free field $\ff{k}{}{x,y}$ then, by \cite[Corollary
  5.5.9]{pC95}, $D$ contains a free
  field with an infinite countable number of generators, say,
  $\ff{k}{}{x_1,x_2,\dots}\subseteq D$. For every $n\geq 1$, letting
  $D_n=\ff{k}{}{x_1,\dots,x_n}$, we obtain an infinite strictly ascending
  chain of subfields in $D$, $D_1\subsetneqq D_2\subsetneqq\dots$.
  It follows from Lemma~\ref{le:Lichtman} that $D\otimes_k D^{\op}$ is not a left
  noetherian ring.
\end{proof}

Now consider a nonabelian torsion-free polycyclic-by-finite group
$G$. We recall that the group ring $K[G]$ is a noetherian domain for
every skew field $K$ (cf.~\cite[Proposition~1.6 and
Corollary~37.11]{dP89}). By Theorem~\ref{th:nofreefield} we have the
following result.

\begin{corollary}\label{cor:group}
  The field of fractions of the group algebra of a nonabelian
  torsion-free polycyclic-by-finite group over a commutative field
  does not contain a noncommutative free field (over any subfield).
\end{corollary}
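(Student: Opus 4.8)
The plan is to combine Theorem~\ref{th:nofreefield} with the good homological properties of group rings of polycyclic-by-finite groups. Let $G$ be a nonabelian torsion-free polycyclic-by-finite group, let $K$ be a commutative field, and let $D$ denote the field of fractions of $K[G]$ (which exists since $K[G]$ is a noetherian domain, hence an Ore domain). Suppose, towards a contradiction, that $D$ contains a noncommutative free field over some subfield $L$ of $D$. Passing to a smaller subfield we may assume $L$ is contained in the centre of $D$ — indeed, a noncommutative free field contains a free field over its centralizer's centre, and in any case the free $L$-field on at least two generators contains a free field over the prime field, which is central. So we may assume $D\supseteq\ff{k}{}{x,y}$ with $k$ central in $D$.

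The key step is to verify the hypothesis of Lemma~\ref{le:Lichtman} fails, i.e.\ that $D\otimes_k D^{\op}$ \emph{is} left noetherian, contradicting Theorem~\ref{th:nofreefield}. For this I would argue as follows. First reduce to the case $k$ equal to the prime field $P$ of $D$: since $k$ is central, $D\otimes_k D^{\op}$ is a quotient (a central localization-type quotient, or at least an epimorphic image obtained by identifying the two copies of $k$) of $D\otimes_P D^{\op}$, so left noetherianity of the latter implies that of the former; alternatively one works directly over $P=K$ (after noting $D$ is generated over its prime field by $K$ and $G$, and $K$ is central). Now $D$ is an Ore localization of $K[G]$, so $D\otimes_K D^{\op}$ is an Ore localization of $K[G]\otimes_K K[G]^{\op}\cong K[G\times G]$ with respect to the set of products of nonzero elements of $K[G]\otimes 1$ and $1\otimes K[G]^{\op}$. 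Since $G\times G$ is again torsion-free polycyclic-by-finite, $K[G\times G]$ is a noetherian domain, and a localization of a left noetherian ring at an Ore set is left noetherian. Hence $D\otimes_K D^{\op}$ is left noetherian.

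Putting this together: if $D$ contained a free field, then by Theorem~\ref{th:nofreefield} (applied with $k$ central, after the reduction above) $D\otimes_k D^{\op}$ would fail to be left noetherian, contradicting the computation just made. Therefore $D$ contains no noncommutative free field over any subfield, which is the assertion.

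The main obstacle I anticipate is the localization step — making precise that $D\otimes_K D^{\op}$ is obtained from $K[G\times G]$ by inverting a genuine two-sided Ore set, and that such a localization of a noetherian ring stays noetherian. One must check that $\{f\otimes 1: 0\neq f\in K[G]\}$ and $\{1\otimes g:0\neq g\in K[G]^{\op}\}$ generate a multiplicatively closed Ore set in $K[G]\otimes_K K[G]^{\op}$; this follows because each factor $K[G]$ is an Ore domain and the two factors centralize each other, so one can clear denominators on each side independently. The secondary subtlety is the reduction from an arbitrary central $k$ (or an arbitrary subfield $L$) down to $K$ or the prime field, which should be handled by the standard fact that a noncommutative free field $\ff{L}{}{X}$ with $|X|\ge 2$ contains $\ff{L_0}{}{x,y}$ for $L_0$ the prime field (via \cite[Theorem~6.4.6]{pC95}), together with the observation that left noetherianity passes to the relevant epimorphic images.
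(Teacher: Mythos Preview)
Your approach is correct and follows essentially the same strategy as the paper: reduce to Theorem~\ref{th:nofreefield} by showing that $D\otimes_K D^{\op}$ is left noetherian as an Ore localization of a noetherian group ring. The one substantive variation is in \emph{which} group ring you use. The paper identifies $k[G]\otimes_k F^{\op}$ with $F^{\op}[G]$, the group ring of $G$ over the skew field $F^{\op}$, which is noetherian because $G$ is polycyclic-by-finite; it then localizes once at $(k[G]\setminus\{0\})\otimes 1$. You instead identify $K[G]\otimes_K K[G]^{\op}$ with $K[G\times G^{\op}]\cong K[G\times G]$ and use that the product of polycyclic-by-finite groups is polycyclic-by-finite; you then localize in two steps. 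Both routes are valid; the paper's avoids checking the Ore condition for a product set, while yours stays entirely within group algebras over commutative fields. One expository wrinkle: your parenthetical ``$P=K$'' and the surrounding reduction are muddled --- showing $D\otimes_P D^{\op}$ noetherian for the prime field $P$ would require $K\otimes_P K$ noetherian, which can fail, so the operative reduction is really to $k=K$ (via \cite[Lemma~9]{kC03}), exactly as the paper does; after that, the passage to arbitrary subfields is handled by the same Cohn references the paper cites.
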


\begin{proof}
  Let $G$ be a nonabelian torsion-free polycyclic-by-finite group. Let
  $F$ denote the left classic field of fractions of $k[G]$ and consider the
  $k$-algebra $R=k[G]\otimes_k F^{\op}\cong F^{\op}[G]$. Since $k[G]$ is a left
  Ore domain, $T=(k[G]\setminus\{0\})\otimes_k 1$ is a left denominator
  subset of $R$. Since $R$ is left noetherian, 
  $T^{-1}R\cong F\otimes_k F^{\op}$ is also left noetherian. By Theorem~\ref{th:nofreefield},
  $F$ does not contain a free field over $k$. Therefore, $F$ does not contain a
  free field over any central subfield, by \cite[Lemma~9]{kC03}. Consequently,
  $F$ does not contain a free field over any subfield, by \cite[Corollary~p.~114]{pC77}
  and \cite[Theorem~5.8.12]{pC95}.
\end{proof}

Let $H$ be a nonabelian torsion-free finitely generated nilpotent
group and consider the group algebra $k[H]$. Since $H$ is
polycyclic-by-finite then the group algebra $k[H]$ is a noetherian
domain. In \cite{lM84}, Makar-Limanov showed that the field of
fractions of $k[H]$ contains noncommutative free algebras over $k$
and, consequently, it contains fields of fractions of free
algebras. However, none of these fields of fractions is a free field, by
Corollary~\ref{cor:group}.

Proceeding in a similar fashion, it is possible to show
that the field of fractions $Q$ of the first Weyl algebra over a
commutative field $k$ does not contain a noncommutative free field
(over any subfield). We remark that, again, Makar-Limanov proved in
\cite {lM83} that if $k$ has zero characteristic, $Q$ contains
noncommutative free algebras over $k$. By what we have just seen, the
subfields of $Q$ generated by these free algebras are not free fields.

We remark that the methods of G.~Elek \cite{gE06} provide an alternative
proof for Corollary~\ref{cor:group} in the case of algebras over
the field of complex numbers.

\section{Free fields of uncountable rank in Malcev-Neumann series rings}

This final section is devoted to the task of showing that Chiba's ideas
in \cite{kC03} can be explored further with the aim of showing that
certain Malcev-Neumann series rings contain free fields of uncountable
rank.

We start by recalling \cite[Lemma~1]{kC03}, which will be used in
the sequel.

\begin{lemma}\label{lem:specializationChiba}
  Let $D$ be a skew field with infinite centre and let $X$ be a set. Let $K$
  be a subfield of $D$ which is its own bicentralizer and whose
  centralizer $K'$ is such that the left $K$-space $KcK'$ is
  infinite-dimensional over $K$ for all $c\in D^\times$. Let $L$ be a
  noncentral subnormal subgroup of the multiplicative group
  $K'^\times$, equivalently, $L$ is a subnormal subgroup of $K'^\times$
  such that $L\nsubseteq K$. Then any full matrix over $D_K\langle
  X\rangle$ is invertible for some choice of values of $X$ in $L$. \qed
\end{lemma}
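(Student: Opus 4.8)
The plan is to linearise the matrix and then reduce the whole statement to an Amitsur-type theorem on (generalised) rational identities relative to the pair $(D,K)$ and the subgroup $L$. Since a full matrix $A$ over $\fr{D}{K}{X}$ involves only finitely many of the elements of $X$, we may assume at the outset that $X=\{x_1,\dots,x_n\}$ is finite.

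The first step is linearisation. The ring $\fr{D}{K}{X}$ is a tensor $D$-ring on the $(D,D)$-bimodule freely generated by $X$ (with each $x_i$ commuting with $K$), so by Cohn's linearisation by enlargement \cite{pC95} the full matrix $A$ is stably associated to a full matrix $B=B_0+\sum_{i,j}P_{ij}x_iQ_{ij}$ which is linear in the $x_i$, with $B_0$ and all $P_{ij},Q_{ij}$ over $D$. Application of a ring homomorphism carries stably associated matrices to stably associated matrices, so a substitution $\phi\colon x_i\mapsto\ell_i\in D$ makes $A$ invertible over $D$ if and only if it does the same to $B$, that is, if and only if $B(\phi)=B_0+\sum_{i,j}P_{ij}\ell_iQ_{ij}\in\mathrm{Mat}(D)$ is invertible. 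Hence it is enough to find $\ell_1,\dots,\ell_n\in L$ making $B(\phi)$ invertible.

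The second step unravels invertibility over $D$. Running a suitable sequence of block eliminations (Schur complements) on $B(\phi)$ shows that $B(\phi)$ is invertible if and only if each member of a prescribed finite list of ``pivots'' is nonzero, every such pivot being a generalised rational expression in $\ell_1,\dots,\ell_n$ with coefficients in $D$ (again with each $x_i$ commuting with $K$). Because $B$ is a full matrix over $\fr{D}{K}{X}$, it becomes invertible over the free field $\ff{D}{K}{X}$, which says precisely that none of these rational expressions is the zero element of $\ff{D}{K}{X}$; so their product $r$ is a \emph{nonzero} generalised rational expression over $(D,K)$. It therefore suffices to know that $r$ can be evaluated, and evaluated to a nonzero value, at some substitution $x_i\mapsto\ell_i\in L$. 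This is an Amitsur-type rational-identity statement: a nonzero generalised rational expression over $(D,K)$ cannot be annihilated by every admissible substitution of its variables into $L$. It is here that the three hypotheses are genuinely used --- that $K$ equals its own bicentralizer, that the left $K$-space $KcK'$ is infinite-dimensional for every $c\in D^\times$, and that $L$ is a noncentral subnormal subgroup of $K'^\times$ and so, by the structure theory of subnormal subgroups of multiplicative groups of skew fields, ``dense'' enough in $K'$ that no proper algebraic relation can hold on it. Establishing this rational-identity statement in the precise form required is the technical core of the argument; following \cite{kC03}, one proves it by induction on $n$, using the infinitude of the centre of $D$ to choose at each stage a value in $L$ that avoids the relation produced by the remaining variables.

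The principal obstacle, then, is exactly this rational-identity theorem: controlling the domain of definition of $r$, and proving that no nonzero generalised rational expression over $(D,K)$ is killed by every admissible substitution into a noncentral subnormal subgroup $L$ of $K'^\times$. The surrounding steps --- reduction to finitely many variables, linearisation with preservation of stable association, and the translation of matrix invertibility into the nonvanishing of a product of Schur-complement rational expressions --- are comparatively routine applications of Cohn's machinery.
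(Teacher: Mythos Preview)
The paper does not give a proof of this lemma at all: it is stated with a terminal \qedsymbol{} and introduced as a recollection of \cite[Lemma~1]{kC03}. So there is no argument in the paper to compare your proposal against; the authors simply import the result from Chiba.

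As for the proposal itself, the reduction steps you describe (finitely many variables, linearisation by enlargement, preservation of fullness under stable association) are standard and correct. The weak point is the passage from invertibility of $B(\phi)$ to the nonvanishing of a single rational expression $r$. Your phrasing ``$B(\phi)$ is invertible if and only if each member of a prescribed finite list of pivots is nonzero'' is not quite right over a skew field: a fixed elimination scheme only tests invertibility on the Zariski-open set where its successive pivots are defined, and different specialisations may require different pivot choices. What one can say is the weaker implication you actually need: working in $\ff{D}{K}{X}$, where $B$ is invertible, fix one elimination scheme and let $r$ be the product of its pivots; then any $\phi$ at which $r$ is defined and nonzero makes $B(\phi)$ invertible. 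That is enough for your purposes, but it is worth stating the logic in that direction only.

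The substantive gap is that you do not prove the rational-identity statement you isolate; you describe it as ``the technical core'' and then point back to \cite{kC03} for its proof. Since the lemma you are trying to establish \emph{is} Chiba's Lemma~1, this is close to circular unless you can point to a genuinely independent rational-identity theorem (for the pair $(D,K)$ and values in a noncentral subnormal $L\leq K'^\times$) that predates or is logically separate from Chiba's argument. In Chiba's paper the induction you allude to is precisely the content of Lemma~1 itself, not a tool used to prove it. So as written, your proposal is a correct structural outline that locates the difficulty accurately, but it does not discharge that difficulty; it reduces the lemma to itself.
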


The following result is a slightly more general version of
\cite[Lemma~2]{kC03}. Note that we use multiplicative notation for
the  value group of $\nu$.

\begin{lemma}\label{lem:notcontainedinthecentre}
  Let $D$ be a skew field with a valuation $\nu$ on an ordered group
  $(G,<)$, and let $K$ be a subfield of $D$ which is its own
  bicentralizer and whose centralizer $K'$ is such that the left
  $K$-space $KcK'$ is infinite-dimensional over $K$, for all $c\in
  D^\times$. Let $L=\{x\in K' : \nu(x)=1\}$. Then $L$ is a normal
  subgroup of the multiplicative group of $K'^\times$ and $L\nsubseteq
  K$.
\end{lemma}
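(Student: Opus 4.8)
The statement has two parts: that $L=\{x\in K' : \nu(x)=1\}$ is a normal subgroup of $K'^{\times}$, and that $L\nsubseteq K$. For the first part I would simply check the subgroup axioms using the valuation properties, noting that $\nu$ restricted to $K'$ still satisfies $\nu(xy)=\nu(x)+\nu(y)$ (written multiplicatively, $\nu(xy)=\nu(x)\nu(y)$) and $\nu(x^{-1})=\nu(x)^{-1}$; the neutral element of the value group (here written $1$) is attained by $1\in K'$, so $1\in L$, and $L$ is closed under products and inverses. For normality in $K'^{\times}$, take $x\in L$ and $g\in K'^{\times}$; then $\nu(gxg^{-1})=\nu(g)\nu(x)\nu(g)^{-1}=\nu(x)=1$ since the value group $G$ of a valuation on a skew field need not be abelian, but $\nu(g)\nu(g)^{-1}=1$ regardless, and $\nu(x)=1$ is central in the trivial sense that conjugation of the identity gives the identity. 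So $L\trianglelefteq K'^{\times}$.

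**The real content: $L\nsubseteq K$.** This is where the hypothesis that $KcK'$ is infinite-dimensional over $K$ for all $c\in D^{\times}$ — and in particular for $c=1$, giving $[KK':K]=\infty$, i.e.\ $K'$ is infinite-dimensional over $K$ on the left — must be used, together with the existence of the valuation. Suppose for contradiction that $L\subseteq K$. The idea is that $L\subseteq K$ forces $K'$ to be "small" relative to $K$ in a way that contradicts $[KK':K]=\infty$. Concretely, the quotient $K'^{\times}/L$ embeds (via $\nu$) into the value group $G$, and $L\subseteq K$ would mean that $K'^{\times}\cap(\text{kernel of }\nu\text{ on }K')$ lands in $K$; more usefully, if $x,y\in K'^{\times}$ have the same value $\nu(x)=\nu(y)$, then $xy^{-1}\in L\subseteq K$, so $x\in Ky$. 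Thus every coset of $L$ in $K'^{\times}$ contributes at most one dimension to $K'$ over $K$ on the appropriate side: choosing coset representatives $\{y_i\}$ of $L$ in $K'^{\times}$, every element of $K'^{\times}$ is of the form (element of $K^{\times}$)$\cdot y_i$, so $K'$ is spanned over $K$ (say on the left) by $\{y_i\}\cup\{0\}$, and hence $[KK':K]\le |K'^{\times}/L|$.

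**Closing the argument.** To reach a contradiction I then need $|K'^{\times}/L|$ to be forced finite — which it need not be in general, so this line as stated is too crude and the hypothesis must enter more sharply. The correct route, following \cite[Lemma~2]{kC03}, is: if $L\subseteq K$, pick any $c\in D^{\times}$; then I claim $KcK'$ is already finite-dimensional over $K$, contradicting the hypothesis. Indeed, writing $K'^{\times}=\bigsqcup_i K^{\times}y_i$ as above is not quite right because the $y_i$ might not reduce the count; instead one uses that for $x\in K'$ the value $\nu(x)$ together with $L\subseteq K$ pins $x$ down modulo $K$, and $K'/K$ (as a left $K$-space) injects into the value group, so has dimension at most the $\Z$-rank of $\nu(K'^{\times})$; but the hypothesis says this dimension is infinite. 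The cleanest resolution — and the one I would write — is to argue that $L\subseteq K$ makes $\nu$ induce an injective group homomorphism $K'^{\times}/K^{\times}\hookrightarrow G/\nu(K^{\times})$ which would force, via a dimension/independence count, the left $K$-space $KK'$ to be finite-dimensional (any finite set of elements of $K'$ with distinct values mod $\nu(K^{\times})$ being $K$-linearly independent, one gets $[KK':K]$ bounded, contradiction). The main obstacle is getting this dimension count exactly right — making precise why $L\subseteq K$ collapses the left $K$-dimension of $K'$ — and I expect that, as in Chiba, a short clean argument exists once the role of "same value implies differ by an element of $L\subseteq K$" is exploited correctly; I would model the writeup closely on the proof of \cite[Lemma~2]{kC03}, adapting it to the general (possibly nonabelian) ordered value group $(G,<)$ in place of $\R$ or $\Z$.
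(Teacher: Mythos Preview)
Your treatment of the normal-subgroup part is fine: $L=\ker(\nu|_{K'^{\times}})$ is the kernel of a group homomorphism, hence normal, and the non-abelian value group causes no trouble since conjugating the identity gives the identity.

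The second part, however, has a genuine gap. Your dimension-counting strategy shows only that if $L\subseteq K$ then any two elements of $K'^{\times}$ with the same value lie in the same left $K$-line, whence $[KK':K]\le |\nu(K'^{\times})|$. But nothing in the hypotheses forces $\nu(K'^{\times})$ to be finite, so this does not contradict $[KK':K]=\infty$; the argument simply stalls. Your later attempts to sharpen the count (injecting $K'^{\times}/K^{\times}$ into $G/\nu(K^{\times})$, or speaking of ``$\Z$-rank'') do not close the gap either, and you end by deferring to Chiba's proof rather than supplying one.

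The paper's argument bypasses dimension counts entirely and uses one idea you did not isolate: since $K$ is its own bicentralizer, the centralizer of $K'$ is $K$, so the \emph{centre of $K'$ is exactly $K\cap K'$}. From $[KK':K]=\infty$ one deduces $[K':K\cap K']=\infty$, so $K'$ is not commutative. Now suppose $L\subseteq K$; then $L\subseteq K\cap K'$. Pick $x\in K'\setminus(K\cap K')$; since $\nu(x)=1$ would put $x\in L\subseteq K\cap K'$, and $\nu(x)<1$ can be replaced by $\nu(x^{-1})>1$, we may take $\nu(x)>1$. As $x$ is non-central in $K'$, choose $y\in K'$ with $xy\ne yx$. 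Then $\nu(1-x)=\min\{1,\nu(x)\}=1$, so $1-x\in L$, yet $(1-x)y\ne y(1-x)$, so $1-x\notin K\cap K'$ and hence $1-x\notin K$. This contradicts $L\subseteq K$. The whole point is the ``$1-x$'' trick, which manufactures an element of value $1$ from a non-central element of large value; no cardinality or dimension bookkeeping is needed.
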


\begin{proof}
  For $c=1$, we get that $KK'$ is infinite dimensional over $K$. Note
  also that the centre of $K'$ is contained in $K$ since it is $K\cap
  K'$, thus $K'$ is infinite dimensional over its centre.

  Suppose that $L\subseteq K$. Let $x\in K'\setminus (K\cap K')$ with
  $\nu(x)>1$. Such an $x$ exists because $K'$ is infinite-dimensional
  over $K\cap K'$ and $L\subseteq K$. Observe that there exists
  $y\in K'$ with $xy\neq yx$. Now $\nu(1-x)=\min\{\nu(1),\nu(x)\}=1$
  but $y(1-x)=y-yx\neq y-xy=(1-x)y$. Thus $1-x\in L$, but $1-x\notin K$,
  a contradiction.
\end{proof}

Let $(G,<)$ be an ordered group and let $\Gamma$ be the chain of its convex
subgroups. Order $\Gamma$ with respect to inclusion, that is, given
$H_1,H_2\in \Gamma$, set $H_1\prec H_2$ if and only if
$H_1\subsetneq H_2$. The order type of $(\Gamma,\prec)$ is an
invariant of $(G,<)$. Consider the subset $\Gamma_0$ of
\emph{principal convex subgroups}, that is, given $H\in \Gamma$, then
$H\in\Gamma_0$ if there exists an element $t\in H$ such that $H=\{x\in
G : t^{-n}\leq x\leq t^{n}, \text{ for some natural number $n$}\}$. A
convex subgroup $H$ is principal if and only if it is the greater
member of a convex jump $(N,H)$ (see, for example, \cite{lF63}).

Let $K$ be a skew field, let $(G,<)$ be an ordered group, let
$K[G;\sigma,\tau]$ be a crossed product group ring and let
$K((G;\sigma,\tau,<))$ be its Malcev-Neumann series ring. Now let $I$ be a set and
suppose that there exists a map $I\rightarrow K((G;\sigma,\tau,<)), i\mapsto
f_i=\sum_{x\in G}\bar{x} a_{ix},$ such that the following two
conditions hold:
\begin{enumerate}
  \item $\bigcup_{i\in I}\supp(f_i)$ is well-ordered, and
  \item for each $x\in G$ the set $\{i\in I : x\in\supp(f_i)\}$ is finite.
\end{enumerate}
Then, following \cite{DL82}, we say that $\sum\limits_{i\in I}f_i$ is
\emph{defined in $K((G;\sigma,\tau,<))$}. When this is the case, $\sum\limits_{i\in I}f_i$
will be used to denote $\sum_{x\in G}\bar{x}\left(\sum_{\{i : x\in
\supp(f_i)\}} a_{ix}\right)$. Note that $\sum\limits_{i\in I}f_i$ is
then an element of $K((G;\sigma,\tau,<)).$

\begin{lemma}\label{lem:welldefinedsum}
  Let $(G,<)$ be an ordered group, let $K$ be a skew field and let
  $K[G;\sigma,\tau]$ be a crossed product. Let $\alpha$ be an ordinal.
  Suppose that the chain $(\Gamma_0,\prec)$ of principal convex
  subgroups of $(G,<)$ contains a subset
  $\{H_\beta\}_{\beta\leq\alpha}$ which is order-isomorphic to $\{\beta :
  \beta\leq\alpha\}$. For all $\beta\leq\alpha$, fix $1<t_\beta$ such
  that $H_\beta=\{x : t_\beta^{-n}\leq x\leq t^n_\beta,\text{ for
  some natural number $n$}\}$. If $\{f_\beta\}_{\beta\leq\alpha}$ is a
  sequence in $K((G;\sigma,\tau,<))$ with $\supp f_\beta\subseteq
  \{x\in H_\beta : x\geq 1\}$ for all $\beta\leq \alpha$, then
  $\sum_{\beta\leq\alpha}\bar{t}_\beta f_\beta$ is defined in
  $K((G;\sigma,\tau,<)).$
\end{lemma}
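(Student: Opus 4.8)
The plan is to check directly the two conditions in the definition of ``$\sum_{\beta\le\alpha}\bar t_\beta f_\beta$ being defined in $K((G;\sigma,\tau,<))$'': that $U:=\bigcup_{\beta\le\alpha}\supp(\bar t_\beta f_\beta)$ is well-ordered, and that for each $g\in G$ only finitely many indices $\beta$ satisfy $g\in\supp(\bar t_\beta f_\beta)$. Both will drop out of a single structural fact about the chain $H_0\subsetneq\cdots\subsetneq H_\beta\subsetneq\cdots$ and the chosen generators $t_\beta$, which I would establish first.

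I would begin by recording that left multiplication by $\bar t_\beta$ merely translates supports, so $\supp(\bar t_\beta f_\beta)=t_\beta\supp(f_\beta)$ (the $\tau$-values are invertible); since $t_\beta\in H_\beta$ and $\supp f_\beta\subseteq\{x\in H_\beta:x\ge 1\}$, this support lies inside $H_\beta$, each of its elements is $\ge t_\beta$, and it is well-ordered because $\bar t_\beta f_\beta\in K((G;\sigma,\tau,<))$. The key claim is then: for $\beta_1<\beta_2\le\alpha$ one has $h<t_{\beta_2}$ for every $h\in H_{\beta_1}$. To prove it, note that the order-isomorphism hypothesis forces $H_{\beta_1}\subsetneq H_{\beta_2}$; if $t_{\beta_2}$ belonged to $H_{\beta_1}$, then $H_{\beta_1}$, being a convex subgroup, would contain all powers $t_{\beta_2}^{\pm n}$ and, by convexity, the whole interval they span, which is exactly $H_{\beta_2}$ by the defining property of $t_{\beta_2}$ --- impossible; so $t_{\beta_2}\notin H_{\beta_1}$, and since $1\in H_{\beta_1}$, $t_{\beta_2}>1$, and $H_{\beta_1}$ is convex, no element of $H_{\beta_1}$ can reach up to $t_{\beta_2}$. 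The conclusion I would emphasize is that the supports $\supp(\bar t_\beta f_\beta)$ are pairwise disjoint and ``stacked'' in the order of the index: for $\beta_1<\beta_2$ every element of $\supp(\bar t_{\beta_1}f_{\beta_1})$ is strictly below every element of $\supp(\bar t_{\beta_2}f_{\beta_2})$, the former lying in $H_{\beta_1}$ and the latter being $\ge t_{\beta_2}$.

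Granting this, the finiteness condition is immediate, in fact in the stronger form that each $g\in G$ lies in \emph{at most one} support: if $g\in\supp(\bar t_{\beta_1}f_{\beta_1})\cap\supp(\bar t_{\beta_2}f_{\beta_2})$ with $\beta_1<\beta_2$ then $g\in H_{\beta_1}$ while $g\ge t_{\beta_2}$, contradicting the key claim. For well-orderedness of $U$, I would argue directly: given $\emptyset\ne T\subseteq U$, the set $\{\beta\le\alpha:T\cap\supp(\bar t_\beta f_\beta)\ne\emptyset\}$ is a nonempty set of ordinals, hence has a least element $\gamma$; then $T\cap\supp(\bar t_\gamma f_\gamma)$ is a nonempty subset of a well-ordered set and so has a least element $m$, and the stacking property shows at once that $m$ is the least element of all of $T$. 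Thus both conditions hold and $\sum_{\beta\le\alpha}\bar t_\beta f_\beta$ is defined.

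The only step requiring genuine care is the key claim in the second paragraph --- extracting from ``$H_{\beta_1}\subsetneq H_{\beta_2}$, both principal'' that $t_{\beta_2}$ strictly dominates all of $H_{\beta_1}$; once that is in place, everything else is routine bookkeeping with well-ordered sets. I would also deliberately phrase the well-orderedness argument via ``least element of an arbitrary nonempty subset'' rather than via descending chains, so as not to invoke any choice principle.
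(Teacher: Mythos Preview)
Your proposal is correct and follows essentially the same approach as the paper's proof: both establish the ``stacking'' inequality (every element of $\supp(\bar t_{\beta_1}f_{\beta_1})$ lies strictly below every element of $\supp(\bar t_{\beta_2}f_{\beta_2})$ when $\beta_1<\beta_2$) by showing $t_{\beta_2}\notin H_{\beta_1}$, and then deduce both defining conditions exactly as you do, including the well-orderedness via the least index $\gamma$ with $T\cap\supp(\bar t_\gamma f_\gamma)\ne\emptyset$. Your version is slightly more explicit in two places --- you spell out the convexity step yielding $h<t_{\beta_2}$ for all $h\in H_{\beta_1}$, and you note that the finiteness condition holds in the sharper form ``at most one'' --- but the argument is the same.
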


\begin{proof}
  First note that if $\beta_1<\beta_2\leq\alpha$, then
  $t_{\beta_2}^{-1}<t_{\beta_1}^{-n}\leq
  t_{\beta_1}^{n}<t_{\beta_2}$, for all natural numbers $n$, for
  otherwise $t_{\beta_2}\in H_{\beta_1}$ and thus
  $H_{\beta_1}=H_{\beta_2}$, a contradiction. Hence, since $\supp
  f_\beta\subseteq \{x\in H_\beta : x\geq 1\}$ and $H_\beta=\{x\in
  G : t_\beta^{-n}\leq x\leq t_\beta^n, \text{ for some natural number $n$}\}$,
  for each $\beta\leq \alpha$, we get that $x_1<x_2$ for each
  $\beta_1<\beta_2$ and $x_1\in\supp \bar{t}_{\beta_1}f_{\beta_1}$,
  $x_2\in \supp \bar{t}_{\beta_2}f_{\beta_2}$. Therefore, for each
  $x\in G$, the set $\{\beta\leq \alpha : x\in\supp
  \bar{t}_{\beta}f_{\beta}\}$ is finite.

  Now let $S$ be a nonempty subset of $\cup_{\beta\leq\alpha}\supp
  \bar{t}_\beta f_{\beta}$. Let $\beta_0=\min \{\beta : S\cap\supp
  \bar{t}_\beta f_\beta\neq\emptyset\}$. Then the least element of the
  well-ordered set $S\cap \supp \bar{t}_{\beta_0} f_{\beta_0}$ is the
  least element of $S$. Hence $\sum_{\beta\leq \alpha}\bar{t}_\beta
  f_\beta$ is defined in $K((G;\sigma,\tau,<))$.
\end{proof}

The proof of the next result is similar to that of
\cite[Theorem~1]{kC03}. We recall the following fact that will be
used in the proof. Let $D$ be a skew field with centre $C$. Let $X$
be a set and consider $\fr{D}{C}{X}$. Let $a,t\in D$, with $t\ne 0$. Then there exists
a unique isomorphism of $D$-rings $\fr{D}{C}{X}\rightarrow \fr{D}{C}{X}$ with
$x\mapsto a+tx$, for all $x\in X$. Hence, if $A(x)$ is a full matrix
over $\fr{D}{C}{X}$, then $A(a+tx)$ is also a full matrix over $\fr{D}{C}{X}$.

\begin{theorem}\label{theo:freefielduncountablerank}
  Let $(G,<)$ be an ordered group, let $K$ be a skew field and let
  $K[G;\sigma,\tau]$ be a crossed product. Let $\zeta$ be an infinite
  cardinal and let $X=\{x_\gamma\}_{\gamma<\zeta}$ be a set of cardinality
  $\zeta$. Suppose that the following two conditions hold true.
  \renewcommand{\theenumi}{\alph{enumi}}
  \begin{enumerate}
  \item The chain $(\Gamma_0,\prec)$ of principal convex subgroups of $(G,<)$
    contains a subset $\{H_\alpha\}_{\alpha<\zeta}$ which is order-isomorphic to
    $\zeta$. \label{conda}
  \item For all $\alpha<\zeta$, the centre $C_\alpha$ of
    $E_\alpha=K((H_\alpha;\sigma,\tau,<))$ is infinite and
    $[E_\alpha:C_\alpha]=\infty$. \label{condb}
  \end{enumerate}
  \renewcommand{\theenumi}{\arabic{enumi}}
  Then $K((G;\sigma,\tau,<))$ contains a free field $\ff{C}{}{X}$,
  where $C$ denotes the centre of $K((G;\sigma,\tau,<))$.
\end{theorem}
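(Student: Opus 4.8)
The plan is to imitate the proof of \cite[Theorem~1]{kC03}, replacing the single valuation and metric completeness used there by a transfinite successive approximation of length $\zeta$ whose convergence is controlled by hypothesis~\eqref{conda} through Lemma~\ref{lem:welldefinedsum}. Write $D=K((G;\sigma,\tau,<))$ and let $P$ be its prime subfield, a central subfield with $|P|\le\aleph_0$; let $\Sigma$ be the set of all full matrices over $\fr{P}{}{X}$, so that $|\Sigma|=\zeta$. It suffices to produce a $\Sigma$-inverting $P$-ring homomorphism $\theta\colon\fr{P}{}{X}\to D$: the universal property of $\ff{P}{}{X}$ then yields an embedding $\ff{P}{}{X}\hookrightarrow D$ in which $X$ freely generates a free field over $P$, and \cite[Lemma~9]{kC03} upgrades this to $\ff{C}{}{X}\hookrightarrow D$.

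Using~\eqref{conda}, for each $\alpha<\zeta$ let $(N_\alpha,H_\alpha)$ be the convex jump with top $H_\alpha$, fix $1<t_\alpha$ with $H_\alpha=\{x:t_\alpha^{-n}\le x\le t_\alpha^{n}\text{ for some }n\in\N\}$, and set $E_\alpha=K((H_\alpha;\sigma,\tau,<))$, a subfield of $D$ on which $\omega$ restricts to a valuation with values in $H_\alpha$. Since $\{H_\alpha\}_{\alpha<\zeta}$ is order-isomorphic to $\zeta$, one has $E_\beta\subseteq E_\alpha$, $H_\beta\subseteq N_\alpha$ and $t_\alpha>H_\beta$ whenever $\beta<\alpha$, while $t_\alpha\notin N_\alpha$. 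By~\eqref{condb}, the centre $C_\alpha$ of $E_\alpha$ is infinite and $[E_\alpha:C_\alpha]=\infty$; taking $C_\alpha$ for the subfield ``$K$'' (it is its own bicentralizer, with centralizer $E_\alpha$, and $\dim_{C_\alpha}(C_\alpha c E_\alpha)=[E_\alpha:C_\alpha]=\infty$ for every $c\in E_\alpha^\times$), Lemma~\ref{lem:notcontainedinthecentre} applied to $E_\alpha$ with the valuation $\omega$ shows that $L_\alpha=\{x\in E_\alpha:\omega(x)=1\}$ is a noncentral normal subgroup of $E_\alpha^\times$, every element of which has support contained in $\{x\in H_\alpha:x\ge 1\}$. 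Now enumerate $\Sigma=\{A_\xi\}_{\xi<\zeta}$ and construct, by transfinite recursion on $\xi<\zeta$, elements $h_{\gamma,\xi}\in L_\xi$ (zero for all but finitely many $\gamma$) so that, writing $c_\gamma^{(\xi)}=\sum_{\beta\le\xi}\bar t_\beta h_{\gamma,\beta}$ and $c_\gamma^{(<\xi)}=\sum_{\beta<\xi}\bar t_\beta h_{\gamma,\beta}$ (both defined and lying in $E_\xi$ by Lemma~\ref{lem:welldefinedsum}), every $A_\eta$ with $\eta\le\xi$ is invertible in $D$ when its variables are given the values $c^{(\xi)}$. At step $\xi$, if $A_\xi$ involves $x_{\gamma_1},\dots,x_{\gamma_m}$, the substitution $x_{\gamma_j}\mapsto c_{\gamma_j}^{(<\xi)}+\bar t_\xi x_j$ defines an honest map $\fr{P}{}{x_{\gamma_1},\dots,x_{\gamma_m}}\to\fr{E_\xi}{C_\xi}{x_1,\dots,x_m}$ (it is the honest inclusion of \cite[Theorem~6.4.6]{pC95} followed by an $E_\xi$-algebra automorphism, by the fact recalled before the theorem, using $c_{\gamma_j}^{(<\xi)}\in E_\xi$), so the image $B$ of $A_\xi$ is a full matrix over $\fr{E_\xi}{C_\xi}{x_1,\dots,x_m}$; Lemma~\ref{lem:specializationChiba} then furnishes $v_1,\dots,v_m\in L_\xi$ with $B(v_1,\dots,v_m)$ invertible in $E_\xi$, and we set $h_{\gamma_j,\xi}=v_j$, whence $A_\xi(c^{(\xi)})=B(v_1,\dots,v_m)$ is invertible.

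It remains to verify that earlier matrices stay invertible and that the recursion converges. For $\eta<\xi$, the value of each variable of $A_\eta$ changes between steps $\eta$ and $\xi$ only by a sum of terms $\bar t_\beta h_{\gamma,\beta}$ with $\eta<\beta\le\xi$, each of $\omega$-value $t_\beta>N_\beta\supseteq H_\eta$; since $A_\eta(c^{(\eta)})$ is invertible inside $E_\eta$ (its entries and its inverse having support in $H_\eta$), one checks that $\Delta=A_\eta(c^{(\xi)})-A_\eta(c^{(\eta)})$, and hence $A_\eta(c^{(\eta)})^{-1}\Delta$, has all entries of $\omega$-value strictly above every element of $H_\eta$, so the second factor in $A_\eta(c^{(\xi)})=A_\eta(c^{(\eta)})\bigl(I+A_\eta(c^{(\eta)})^{-1}\Delta\bigr)$ is invertible in $D$ by the standard inversion of $I+M$ in a Malcev-Neumann ring when the entries of $M$ have $\omega$-value $>1$, and therefore $A_\eta(c^{(\xi)})$ is invertible. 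Finally, for each $\gamma$ the family $\{h_{\gamma,\beta}\}_{\beta<\zeta}$ satisfies the hypotheses of Lemma~\ref{lem:welldefinedsum}, so $c_\gamma:=\sum_{\beta<\zeta}\bar t_\beta h_{\gamma,\beta}$ is defined in $D$; as $c_\gamma-c_\gamma^{(\xi)}$ is, for every $\xi$, a sum of terms of $\omega$-value above $H_\xi$, the same estimate shows that $\theta\colon x_\gamma\mapsto c_\gamma$ inverts every $A_\xi\in\Sigma$, completing the proof. I expect the main obstacle to be exactly this bookkeeping: arranging the successive corrections so that each is, in $\omega$-value, ``below'' --- relative to the chain of convex subgroups --- every correction made later, so that no already-inverted matrix is disturbed and the transfinite sum genuinely lands in $D$; once the level structure provided by~\eqref{conda} is in hand, this proceeds as in Chiba's argument, with Lemma~\ref{lem:welldefinedsum} taking the place of metric completeness.
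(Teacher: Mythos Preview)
Your proposal is correct and follows essentially the same approach as the paper's proof: reduce via \cite[Lemma~9]{kC03} to finding a $\Sigma$-inverting specialization of $\fr{P}{}{X}$ into $E=K((G;\sigma,\tau,<))$, build the values $d_\gamma$ by transfinite recursion using Lemmas~\ref{lem:specializationChiba} and~\ref{lem:notcontainedinthecentre} at each stage $\alpha$ inside $E_\alpha$, and invoke Lemma~\ref{lem:welldefinedsum} both to make the partial sums $\sum_{\beta\le\alpha}\bar t_\beta h_{\gamma\beta}$ live in $E_\alpha$ and to guarantee that the transfinite limit $d_\gamma$ exists in $E$. The only organizational difference is that you carry along the stronger running invariant ``$A_\eta(c^{(\xi)})$ is invertible for all $\eta\le\xi$'', whereas the paper records only condition~(iv), namely that $A_\alpha\big(\sum_{\beta\le\alpha}\bar t_\beta h_{\gamma\beta}\big)$ is invertible over $E_\alpha$, and postpones the perturbation estimate to a single computation at the end (writing $B_\alpha=\bar t_\alpha^{-b_\alpha}B_\alpha^0$ and showing $B_\alpha^0\cdot A_\alpha(d_\gamma)=\bar t_\alpha^{b_\alpha}I+M'$ with $M'$ having entries in $V_{t_{\alpha+1}}$); your formulation in terms of ``$\omega$-value above every element of $H_\eta$'' is an equivalent way to phrase the same estimate and avoids naming the successor level explicitly.
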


\begin{proof}
  Let $P$ be the prime subfield of $C$. Let $\Sigma$ be
  the set of all full matrices over $P\langle X\rangle$. Then $\Sigma$
  is of cardinality $\zeta$. Write
  $\Sigma=\{A_\alpha(x_\gamma)\}_{\alpha<\zeta}$. For each
  $\alpha<\zeta$, fix $1<t_\alpha\in H_\alpha$  such that
  $H_\alpha=\{x\in G : t_\alpha^{-n}\leq x\leq t_\alpha^n, \text{
  for some natural number $n$}\}$.

  By \cite[Lemma~9]{kC03}, it is enough to prove that $P(\langle
  X\rangle)$ embeds into $E=K((G;\sigma,\tau,<))$. We will show that
  there exists an honest embedding $\fr P{}X\hookrightarrow E$, that
  is, there exists $\{d_\gamma\}_{\gamma<\zeta}\subseteq E$ such that
  the matrix $A_\alpha(d_\gamma)$ is invertible for all
  $\alpha<\zeta$. The first part of the proof is devoted to the
  construction of these $d_\gamma,$ $\gamma<\zeta$.

  Let $\omega\colon E\rightarrow G_{\infty}$ denote
  the natural valuation given by $\omega(f) = \min\supp f$, for all
  nonzero $f\in E$.
  We define sequences $\{h_{\gamma\beta} : \gamma,\beta <\zeta\}$
  satisfying the following conditions.
  \renewcommand{\theenumi}{\roman{enumi}}
  \begin{enumerate}
  \item $h_{\gamma\alpha}\in V_{\alpha}'=\{f\in E_\alpha : \omega(f)\geq
    1\}$, for all $\alpha<\zeta$. \label{c1}
  \item For each $\beta$, $h_{\gamma\beta}=0$ for almost all but a
    finite number of $\gamma$. \label{c2}
  \item $\sum_{\beta\leq \alpha} \bar{t}_\beta h_{\gamma\beta}$ is defined in $E_\alpha$.
    \label{c3}
  \item $A_\alpha(\sum_{\beta\leq\alpha}\bar{t}_\beta h_{\gamma\beta})$ is invertible
    over $E_\alpha$, for all $\alpha<\zeta$. \label{c4}
  \end{enumerate}
  \renewcommand{\theenumi}{\arabic{enumi}}

  First, observe that \eqref{c3} always holds by Lemma~\ref{lem:welldefinedsum}.

  Set $L_0=\{x\in E_0 : \omega(x)=1\}$. Then $L_0\nsubseteq C_0$, by
  Lemma~\ref{lem:notcontainedinthecentre}. Consider the finite set of
  $x$'s which appear in the matrix $A_0(x_\gamma)$, \textit{e.g.}~$x_0,x_1,\dots,x_r$.
  Since $A_0(\bar{t}_0x_\gamma )$ is a full matrix
  over $\fr{E_0}{C_0}{X}$, there are elements $h_{\gamma
  0}\in L_0$, $\gamma=1,\dots,r$, such that
  $A_\gamma(\bar{t}_0h_{\gamma0})$ is an invertible matrix over $E_0$, by
  Lemma~\ref{lem:specializationChiba}. Set
  $h_{\gamma 0}=0$ for $\gamma\notin\{1,\dotsc,n\}$. Let $B_0$ be the
  inverse of $A_0(\bar{t}_0h_{\gamma0})$. There exists a natural number $b_0>0$
  such that $B_0=\bar{t}_0^{-b_0}B_0^0 $, where all entries of $B_0^0$ are
  elements of $V_{01}=\{f\in E_0 : \omega(f)\geq 1\}$.

  Let $0<\alpha<\zeta$. Suppose that we have defined $h_{\gamma\beta}$
  for $\gamma<\zeta$, $\beta<\alpha$ satisfying \eqref{c1}--\eqref{c4}.
  Set $L_\alpha=\{f\in E_\alpha : \omega(f)=1\}$. Then
  $L_\alpha\nsubseteq C_\alpha$, by
  Lemma~\ref{lem:notcontainedinthecentre}. Consider the finite set
  $\{x_{\gamma_1},\dots,x_{\gamma_s}\}\subseteq X$ which appear in the
  matrix $A_\alpha(x_\gamma)$. Since
  $A_\alpha\big(\sum_{\beta<\alpha}\bar{t}_\beta h_{\gamma \beta} +\bar{t}_\alpha
  x_\alpha \big)$ is a full matrix over $\fr{E_\alpha}{C_\alpha}{X}$,
  there are $h_{\gamma\alpha}\in L_\alpha$,
  $\gamma=\gamma_1,\dots,\gamma_s$, such that
  $A_\alpha\big(\sum_{\beta\leq \alpha}\bar{t}_\beta h_{\gamma\beta}\big)$
  is an invertible matrix over $E_\alpha$, by
  Lemma~\ref{lem:specializationChiba}. Set $h_{\gamma\alpha}=0$ for
  $\gamma\notin\{\gamma_1,\dotsc,\gamma_s\}$. Let $B_\alpha$ be the
  inverse matrix of $A_\alpha\big(\sum_{\beta\leq\alpha}\bar{t}_\beta
  h_{\gamma\beta}\big)$. There exists a natural number $b_\alpha>0$ such that
  $B_\alpha=\bar{t}_\alpha^{-b_\alpha}B_\alpha^0$, where all the entries of
  $B_\alpha^0$ are elements $V_{\alpha 1}=\{f\in E_\alpha : \omega(f)\geq 1\}$.

  Now it is easy to see that
  $d_\gamma=\sum_{\alpha<\zeta}\bar{t}_\alpha h_{\gamma\alpha}$
  is defined in  $E$, for
  each $\gamma<\zeta$.

  We now show that $A_\alpha(d_\gamma)=A_\alpha
  \big(\sum_{\alpha<\zeta}\bar{t}_\alpha h_{\gamma\alpha}\big)$ is an
  invertible matrix over $E$, for all
  $\alpha<\zeta$. For each $\alpha<\zeta$, we have
  $$
  B_\alpha^0 \cdot A_\alpha
    \Big(\sum_{\beta\leq\alpha}\bar{t}_\beta h_{\gamma\beta}\Big)=
    \begin{pmatrix}
      \bar{t}_\alpha^{b_\alpha} & & 0 \\
      & \ddots & \\
      0 & & \bar{t}_\alpha^{b_\alpha}
    \end{pmatrix}
  $$
  and
  $$
  \sum_{\beta<\zeta}\bar{t}_\beta h_{\gamma\beta}\equiv
  \sum_{\beta\leq\alpha}\bar{t}_\beta h_{\gamma\beta}
  \mod (V_{t_{\alpha+1}}),
  $$
  where $V_{t_{\alpha+1}}=\{f\in E : \omega(f)\geq
  t_{\alpha+1}\}$. Hence,
  $$
  B_\alpha^0\cdot A_\alpha
  \Big(\sum_{\beta<\zeta}\bar{t}_\beta h_{\gamma\beta}\Big)=
  \begin{pmatrix}
    \bar{t}_\alpha^{b_\alpha} & & 0\\
    & \ddots & \\
    0 & & \bar{t}_\alpha^{b_\alpha}
  \end{pmatrix}+ M',
  $$
  where the entries of $M'$ are in $V_{t_{\alpha+1}}$. Since
  $\{\bar{t}_\alpha^{b_\alpha}(1-M)\}^{-1}=(1+M+M^2+\dotsb)\bar{t}^{-b_\alpha}$,
  where $M$ is a matrix over $V_1=\{f\in E : \omega(f)>1\}$, it follows that
  $A_\alpha(d_\gamma)$ is invertible over $E$.
\end{proof}

In the proof of the foregoing result, we have used each $t_\beta$ to invert a matrix of $\Sigma$.
We remark that in \cite[Theorem~1]{kC03}, with only one $t$, an infinite countable number of matrices are inverted. We could have done something similar, but it would not be of any help because the set of $t$'s has to be of cardinality $\zeta$ since $X$ is of uncountable cardinality $\zeta$.

\medskip

Our next goal is to prove a result similar to
Corollary~\ref{cor:MNbig} for an uncountable set $X$.
For that we make a free abelian group into an ordered
group with a chain of principal convex subgroups which is order-isomorphic to a certain cardinal.

Let $\zeta$ be a cardinal, and  let $G_\zeta$ be the free abelian
group with a basis $\{e_\beta : \beta<\zeta\}$. Thus every
nonzero element $x$ of $G_\zeta$ is uniquely expressed as
\begin{equation}\label{eq:expressionelements}
  x=m_1e_{\beta_1}+\dots+m_re_{\beta_r},
\end{equation}
with $m_i\in\mathbb{Z}\setminus\{0\}$ and ordinals
$\beta_1<\dots<\beta_r<\zeta$.

We now make $G_\zeta$ into an ordered group in the following way. Given $x$ as in
\eqref{eq:expressionelements}, we say that $x>0$ if and
only if $m_r>0$. Then given $x,y\in G_\zeta$, it is not
difficult to see that the relation given by $x>y$ whenever $x-y>0$ is a total order in $G_{\zeta}$
and that $(G_{\zeta},<)$ is an ordered group.

For each ordinal $\alpha<\zeta$, let $H_\alpha$ be the subgroup
generated by $\{e_\beta\}_{\beta\leq\alpha}$. Note that $H_{\alpha}$ is a
convex subgroup of $(G_{\zeta},<)$. In fact, $H_\alpha$ is a principal convex
subgroup of $(G_{\zeta},<)$, because $H_\alpha=\{x\in G : -ne_\alpha\leq x\leq
ne_\alpha, \text{ for some natural number $n$}\}$.

\begin{corollary}
  Let $K$ be a skew field with centre $F$ such that the dimension of $K$ over
  $F$ is infinite.
  Let $X$ be a set of cardinality $\zeta$. Consider the ordered group
  $(G_\zeta,<)$. Then the field $K((G_\zeta;<))$ contains a free field
  $F(\langle X\rangle)$.
\end{corollary}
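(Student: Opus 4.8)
The plan is to apply Theorem~\ref{theo:freefielduncountablerank} to the group-ring crossed product $K[G_\zeta]$ over the ordered group $(G_\zeta,<)$ constructed just above, taking for the required subchain of principal convex subgroups the subgroups $H_\alpha$ ($\alpha<\zeta$) generated by $\{e_\beta\}_{\beta\le\alpha}$. We may assume $\zeta$ is infinite, the remaining cases being covered by Corollary~\ref{cor:MNbig}. Thus I have to verify the two hypotheses of that theorem and then convert its conclusion --- a free field over the centre $C$ of $K((G_\zeta;<))$ --- into a free field over $F$.

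Hypothesis (a) is essentially already recorded in the paragraphs preceding the statement: each $H_\alpha$ is a principal convex subgroup of $(G_\zeta,<)$, and $\alpha\mapsto H_\alpha$ is an order isomorphism of $\zeta$ onto $\{H_\alpha\}_{\alpha<\zeta}$, since $e_\beta\in H_\beta\setminus H_\alpha$ whenever $\alpha<\beta$, while $H_\beta\subseteq H_\alpha$ whenever $\beta\le\alpha$. Hence $\{H_\alpha\}_{\alpha<\zeta}$ is a subchain of $(\Gamma_0,\prec)$ order-isomorphic to $\zeta$.

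For hypothesis (b) I would first identify the centre $C_\alpha$ of $E_\alpha=K((H_\alpha;<))$. Arguing as in the proof of Lemma~\ref{le:ffskg}: if $f=\sum_{x\in H_\alpha}\bar x a_x$ is central then, $H_\alpha$ being abelian with trivial action, the relations $af=fa$ for all $a\in K$ force every coefficient $a_x$ into $F$; conversely every such series is central. Thus $C_\alpha=F((H_\alpha;<))$, which is infinite because it contains the group ring $F[H_\alpha]$ of the infinite group $H_\alpha$ (already $\langle e_0\rangle\cong\Z$ is infinite). Moreover $[E_\alpha:C_\alpha]=\infty$: if $k_1,\dots,k_n\in K$ are linearly independent over $F$, a relation $\sum_{i=1}^n c_ik_i=0$ with $c_i=\sum_x\bar x a_{ix}\in C_\alpha$ forces, upon comparing the coefficient of each $\bar x$, that $\sum_i a_{ix}k_i=0$ with $a_{ix}\in F$, whence $a_{ix}=0$ for all $i$ and $x$, i.e.\ $c_i=0$; so $k_1,\dots,k_n$ remain linearly independent over $C_\alpha$ and $[E_\alpha:C_\alpha]\ge n$ for every $n$.

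With both hypotheses verified, Theorem~\ref{theo:freefielduncountablerank} provides an embedding $\ff{C}{}{X}\subseteq K((G_\zeta;<))$, where $C$ is the centre of $K((G_\zeta;<))$. Since $F$ is central in $K$ and the action of $G_\zeta$ is trivial, $F\subseteq C$, so the natural inclusion $\fr{F}{}{X}\hookrightarrow\fr{C}{}{X}$ is honest by \cite[Theorem~6.4.6]{pC95} and extends to the universal fields of fractions; this yields $\ff{F}{}{X}\subseteq\ff{C}{}{X}\subseteq K((G_\zeta;<))$, as required. The only points needing any attention are the centre computation for $E_\alpha$ and the passage from $[K:F]=\infty$ to $[E_\alpha:C_\alpha]=\infty$; both are routine, so I do not expect a genuinely hard step in this argument.
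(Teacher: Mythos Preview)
Your proposal is correct and follows essentially the same approach as the paper: verify hypotheses (a) and (b) of Theorem~\ref{theo:freefielduncountablerank} by identifying $C_\alpha=F((H_\alpha;<))$ via commutation with $K$ and lifting an $F$-independent set in $K$ to a $C_\alpha$-independent set in $E_\alpha$, then descend from the free field over $C$ to one over $F$. The only minor differences are cosmetic: you treat the case of finite $\zeta$ separately and invoke \cite[Theorem~6.4.6]{pC95} explicitly for the final descent, whereas the paper computes $C=F((G_\zeta;<))$ and simply notes that $\ff{F((G_\zeta;<))}{}{X}$ contains $\ff{F}{}{X}$.
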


\begin{proof}
  We show that the conditions \eqref{conda} and \eqref{condb} of
  Theorem~\ref{theo:freefielduncountablerank} are satisfied.
  The chain of principal convex subgroups $\{H_\alpha :
  \alpha<\zeta\}$ of $G_\zeta$ is isomorphic, as an ordered set, to the
  cardinal $\zeta$.
  For each $\alpha<\zeta$, it can be proved that a series $f\in
  E_\alpha =K((H_\alpha;<))$ commutes with each $d\in K$ if and only if $f\in
  F_\alpha=F((H_\alpha;<))$. Thus $F_\alpha$ is the centre of
  $E_\alpha$. Similarly, the centre of
  $K((G_\zeta;<))$ is $F((G_\zeta;<))$.
  Now if $\{d_i\}_{i\in I}$ is a basis of $K$ as an $F$-vector space, then
  the set $\{d_i\}_{i\in I}$ is a subset of $E_\alpha$ which is linear independent
  over $F_\alpha$.
  Hence the dimension $[E_\alpha:F_\alpha]$ is infinite.
  It follows from Theorem~\ref{theo:freefielduncountablerank} that
  $K((G_\zeta;<))$ contains a free field $\ff{F((G_\zeta;<))}{}{X}$.
  In particular, it contains $\ff{F}{}{X}$.
\end{proof}

\end{document}